\documentclass[11pt,reqno]{amsart}
\usepackage[english]{babel}
\usepackage{graphicx}
\usepackage[usenames]{xcolor}
\usepackage[all]{xy}
\usepackage{amsfonts,dsfont,mathrsfs}
\usepackage{amsmath, amsthm, amssymb}
\usepackage{enumerate, url}

\setcounter{tocdepth}{1}
\numberwithin{equation}{section}

\newtheorem{thm}{Theorem}[section]

\newtheorem{prop}[thm]{Proposition}
\newtheorem{lem}[thm]{Lemma}
\theoremstyle{definition}
\newtheorem{dfn}[thm]{Definition}

 \newtheorem{claim}[thm]{Claim}
  \newtheorem{rmk}[thm]{Remark}
\newtheorem{ques}[thm]{Question}

\DeclareMathOperator{\Span}{span}

\newcommand{\bbR}{\ensuremath{\mathbb{R}}}
\newcommand{\bbQ}{\ensuremath{\mathbb{Q}}}

\newcommand{\bbZ}{\ensuremath{\mathbb{Z}}}

\newcommand{\bbP}{\ensuremath{\mathbb{P}}}
\newcommand{\bbH}{\ensuremath{\mathbb{H}}}
\newcommand{\bbE}{\ensuremath{\mathbb{E}}}
\newcommand{\bbA}{\ensuremath{\mathbb{A}}}

\newcommand{\iell}{\reflectbox{\rotatebox[origin=c]{180}{$\ell$}}}
\definecolor{aqua}{RGB}{61,227,244}

\date{\today}

\begin{document}

\title{The fundamental theorem of affine geometry on tori}

\author{Jacob Shulkin}

\author{Wouter van Limbeek}
\address{Department of Mathematics \\ 
                 University of Michigan \\
                 Ann Arbor, MI 48109}

\date{\today}

\begin{abstract} The classical Fundamental Theorem of Affine Geometry states that for $n\geq 2$, any bijection of $n$-dimensional Euclidean space that maps lines to lines (as sets) is given by an affine map. We consider an analogous characterization of affine automorphisms for compact quotients, and establish it for tori: A bijection of an $n$-dimensional torus ($n\geq 2$) is affine if and only if it maps lines to lines. \end{abstract}

\maketitle
\tableofcontents

\section{Introduction}
\label{sec:intro}

\subsection{Main result} A map $f:\bbR^n\to \bbR^n$ is \emph{affine} if there is an $n\times n$-matrix $A$ and $b\in\bbR^n$ such that $f(x)=Ax+b$ for all $x\in\bbR^n$. The classical Fundamental Theorem of Affine Geometry (hereafter abbreviated as FTAG), going back to Von Staudt's work in the 1840s \cite{ftag}, characterizes invertible affine maps of $\bbR^n$ for $n\geq 2$: Namely, if $f:\bbR^n\to \bbR^n$ is a bijection so that for any line $\ell$, the image $f(\ell)$ is also a line, then $f$ is affine. 

Since then, numerous generalizations and variations have been proven, both algebraic and geometric. For example, on the algebraic side, there is a version for for projective spaces (see e.g. \cite{ftpg}), general fields $k$, and for noninvertible maps (by Chubarev-Pinelis \cite{ftgnoninv}). On the geometric side, an analogue of FTAG holds in hyperbolic geomety (see e.g. \cite{fthg}), as well as in Lorentzian geometry, where the role of lines is played by lightcones (by Alexandrov \cite{ftlg}). 

All of these versions have in common that they are characterizations of self-maps of the model space of the corresponding geometry (e.g. $\bbE^n, \bbH^n$, or $\bbP^n$). On the other hand, if $X$ is some topological space, and $\mathcal{C}$ is a sufficiently rich family of curves on $X$, then one can expect rigidity for self-maps of $X$ preserving $\mathcal{C}$. In this light it seems reasonable to ask for the following generalized version of the FTAG:
	\begin{ques}[Generalized FTAG] Let $M$ be an affine manifold of dimension $>1$. Suppose that $f:M\to M$ is a bijection such that for any affine line $\ell$ in $M$, the image $f(\ell)$ is also an affine line (as a set). Is $f$ affine?\label{q:genftag}\end{ques}
	\begin{rmk} Here by \emph{affine manifold} and \emph{affine line}, we use the language of geometric structures: If $M$ is a smooth manifold of dimension $n$, then an \emph{affine structure} on $M$ is a covering of $M$ by $\bbR^n$-valued charts whose transition functions are locally affine. An \emph{affine line} in an affine manifold $M$ is a curve in $M$ that coincides with an affine line segment in affine charts. A map is \emph{affine} if in local affine coordinates, the map is affine. See \cite{geomstruct} for more information on affine manifolds and geometric structures. \label{rmk:geomstruct}\end{rmk}
	
	\begin{rmk} Of course one can formulate analogues of Question \ref{q:genftag} for the variations of FTAG cited above, e.g. with affine manifolds (affine lines) replaced with projective manifolds (projective lines) or hyperbolic manifolds (geodesics).\end{rmk}
	
	The only case in which Question \ref{q:genftag} known is the classical FTAG (i.e. $M=\bbA^n$ is affine $n$-space). Our main result is that Question \ref{q:genftag} has a positive answer for the standard affine torus:
	\begin{thm} Let $n\geq 2$ and let $T=\bbR^n\slash \bbZ^n$ denote the standard $n$-torus. Let $f:T\to T$ be any bijection that maps lines to lines (as sets). Then $f$ is affine. \label{thm:main}\end{thm}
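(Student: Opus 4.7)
The strategy is to reduce to the classical FTAG on the universal cover $\bbR^n$: if we can lift $f$ to a line-preserving bijection $\tilde f:\bbR^n\to\bbR^n$, then classical FTAG yields $\tilde f(x)=Ax+b$, and $\bbZ^n$-equivariance of the lift forces $A\in\mathrm{GL}_n(\bbZ)$, whence $f$ descends as an affine map. The obstacle is that $f$ is not a priori continuous, so producing the lift requires first extracting substantial combinatorial rigidity from the line-preservation hypothesis.

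The first step is to distinguish intrinsically the two species of lines in $T$: \emph{closed} lines (rational direction, homeomorphic to $S^1$) and \emph{dense} lines (irrational direction, injectively immersed copies of $\bbR$). I would try to prove the characterization: $\ell$ is closed if and only if there exists a line $\ell'\neq\ell$ with $2\leq|\ell\cap\ell'|<\infty$. For closed $\ell$, a transverse closed $\ell'$ with intersection number $\geq 2$ is easy to produce, since these intersection numbers are essentially determinants $|\det(v,w)|$ of pairs of primitive integer direction vectors and can be made arbitrarily large. For dense $\ell$, the intersection $\ell\cap\ell'$ is in bijection with the affine lattice $(\Span(v,w)-q)\cap\bbZ^n$, where $v,w$ are the directions of $\ell,\ell'$ and $q$ a connecting vector; this affine lattice is either empty, a singleton, or infinite, so values in $\{2,3,\dots\}$ never arise. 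Since $f$ preserves intersection cardinalities, $f$ must map closed lines to closed lines.

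Next, I would show that $f$ preserves parallelism among closed lines, for example by characterizing parallel closed lines via transverse ``ladder'' constructions of closed lines meeting both. This gives an induced action of $f$ on $\bbP^{n-1}(\bbQ)$, the space of rational directions, which preserves pairwise intersection numbers of closed lines. One should then argue that the only bijections of $\bbP^{n-1}(\bbQ)$ preserving all these determinantal data come from $\mathrm{GL}_n(\bbZ)$. After composing with the induced affine automorphism and a translation, we may assume $f$ fixes the identity of $T$ and preserves every foliation of $T$ by parallel closed lines in a given rational direction.

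Finally, I would construct the lift $\tilde f:\bbR^n\to\bbR^n$: use the preserved rational foliations to transport each point of $\bbR^n$, then apply the classical FTAG to conclude. The main obstacle I expect is this last step: checking that the transport data along different rational foliations agrees, so that $\tilde f$ is a genuine line-preserving bijection of $\bbR^n$ and not an ad hoc gluing, requires a delicate argument combining density of rational data with the combinatorial structure already established. It is here that the absence of a priori continuity of $f$ is felt most acutely, and a secondary subtlety is that the characterizations of closed lines and of parallelism used in the earlier steps must be robust enough to survive the skew-line phenomena that appear in dimensions $n\geq 3$ but are absent in the plane.
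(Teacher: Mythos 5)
Your first two steps are essentially sound and run parallel to the paper's Section 2: the characterization of closed lines via intersection cardinalities (your lattice argument gives counts in $\{0,1,\infty\}$ for a line with irrational direction, modulo a small care with identifications when the second direction is rational) is a workable substitute for the paper's Lemma \ref{lem:qcrit}, and it does show $f$ preserves rational lines. But the final step — construct a lift $\widetilde{f}:\bbR^n\to\bbR^n$ and invoke classical FTAG — is not a deferred technicality; it is the precise point where the strategy fails, and the paper's remark after the statement of the main theorem flags exactly this. Here is the failure concretely in dimension $2$. After your reductions one can show (as the paper does at the start of Section \ref{sec:end2dim}) that $f(x,y)=(\sigma(x),\sigma(y))$ for some bijection $\sigma:S^1\to S^1$. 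Any set-theoretic lift $\widetilde{\sigma}:\bbR\to\bbR$ of $\sigma$ yields $\widetilde{f}=(\widetilde{\sigma},\widetilde{\sigma})$, but the image under $\widetilde{f}$ of the line $y=x+c$ in $\bbR^2$ is the set $\{(\widetilde{\sigma}(x),\widetilde{\sigma}(x+c))\}$, which is only constrained to lie in the \emph{countable union} of parallel lines $y=x+c'+k$, $k\in\bbZ$, with the integer $k$ free to jump as $x$ varies, since $\sigma$ is not continuous. So $\widetilde{f}$ need not map lines to lines, and classical FTAG cannot be applied. No ``density of rational data'' argument can close this: the enemy is a wild additive bijection of $S^1$ (Hamel-basis type), which can fix $\bbQ/\bbZ$ pointwise and preserve every rational foliation setwise while scrambling each leaf. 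Killing these maps requires an algebraic argument, which is the actual core of the paper: $\sigma$ is shown to be a group homomorphism (Claim \ref{claim:hom}, via the block configurations), the induced slope map is shown to lift coherently to $\bbR$ with the integer ambiguities controlled by intersection counts with lines such as $v_{1/p}$ and $h_{1/3}$ (Claims \ref{claim:lift}--\ref{claim:mult}), and one concludes because $\bbR$ admits no nontrivial field automorphism. Your proposal contains no substitute for this mechanism; ``checking that the transport data agrees'' is not a gap in bookkeeping but the whole theorem.

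Two secondary gaps. First, before any ``transport'' can even be set up you would need to know where $f$ sends rational points; preserving each rational foliation as a foliation leaves the leaf permutation (say $v_a\mapsto v_{\tau(a)}$ for vertical lines) completely undetermined, and pinning it down is nontrivial — it is the paper's Theorem \ref{theorem:rationals}, proved by the quadrilateral argument with the points $P,Q,R,S$ and the relations forcing $\delta=0$. Second, your step asserting that every bijection of $\bbP^{n-1}(\bbQ)$ preserving pairwise intersection numbers of closed lines comes from $\mathrm{GL}_n(\bbZ)$ is stated without proof, and for $n\geq 3$ it is not a routine determinant fact: the intersection number of $[\bbR v]$ and $[\bbR w]$ for primitive $v,w\in\bbZ^n$ is the index $[\bbZ^n\cap\Span(v,w):\bbZ v\oplus \bbZ w]$, which depends on the ambient lattice slice, not just on a $2\times 2$ determinant. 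The paper sidesteps both issues in higher dimensions by inducting on dimension through coordinate hyperplanes (Section \ref{sec:ndim}, using Lemma \ref{lem:span} and Claim \ref{claim:ainv}), which is a cleaner route than a direct rigidity statement on $\bbP^{n-1}(\bbQ)$.
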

	\begin{rmk} Note that $f$ is not assumed to be continuous! Therefore it is not possible to lift $f$ to a map of $\bbR^n$ and apply the classical FTAG. In addition, the proof of FTAG does not generalize to the torus setting: The classical proof starts by showing that $f$ maps midpoints to midpoints. Iteration of this property gives some version of continuity for $f$, which is crucial to the rest of the proof. The reason that $f$ preserves midpoints is that the midpoint of $P$ and $Q$ is the intersection of the diagonals of any parallellogram that has $PQ$ as a diagonal. This property is clearly preserved by $f$.
	
	There are several reasons this argument fails on the torus: First, any two lines may intersect multiple, even infinitely many, times. Hence there is no hope of characterizing the midpoint as the unique intersection of two diagonals. And second, any two points are joined by infinitely many distinct lines. Therefore there is no way to talk about the diagonals of a parallellogram.
	
	In fact, these two geometric differences (existence of multiple intersections and multiple lines between points) will play a crucial role in the proof of Theorem \ref{thm:main}.\end{rmk}
	
	\begin{rmk} The standard affine structure on the torus is the unique Euclidean structure. However, there are other affine structures on the torus, both complete and incomplete. We refer to \cite{geomstruct} for more information. We do not know whether a similar characterization of affine maps holds for these other, non-Euclidean structures.\end{rmk}
	
Finally, let us mention some related questions in (pseudo-)Riemannian geometry. If $M$ is a smooth manifold, then two metrics $g_1$ and $g_2$ on $M$ are called \emph{geodesically equivalent} if they have the same geodesics (as sets). Of course if $M$ is a product, then scaling any factor will not affect the geodesics. Are any two geodesically equivalent metrics isometric up to scaling on factors? 

This is of course false for spheres (any projective linear map preserves great circles), but Matveev essentially gave a positive answer for Riemannian manifolds with infinite fundamental group \cite{geodeq}: If $M$ admits two Riemannian metrics that are geodesically equivalent but not homothetic, and $\pi_1(M)$ is infinite, then $M$ supports a metric such that the universal cover of $M$ splits as a Riemannian product.

Of course Matveev's result makes no reference to maps that preserve geodesics. The related problem for maps has been considered with a regularity assumption, and has been called the ``Projective Lichnerowicz Conjecture" (PLC): First we say that a smooth map $f:M\rightarrow M$ of a closed (pseudo-)Riemannian manifold $M$ is \emph{affine} if $f$ preserves the Levi-Civita connection $\nabla$. Further $f$ is called \emph{projective} if $\nabla$ and $f^\ast\nabla$ have the same (unparametrized) geodesics. PLC then states that unless $M$ is covered by a round sphere, the group of affine transformations has finite index in the group of projective transformations.

For Riemannian manifolds, Zeghib has proven PLC \cite{zeghibPLC}. See also \cite{PLOC} for an earlier proof by Matveev of a variant of this conjecture. In view of these results, and Question \ref{q:genftag}, let us ask:

\begin{ques} Let $M$ be a closed nonpositively curved manifold of dimension $>1$ and let $\nabla$ be the Levi-Civita connection. Suppose $f:M\rightarrow M$ maps geodesics to geodesics (as sets). Is $f$ affine (i.e. smooth with $f^\ast \nabla=\nabla$)? \label{q:npc}\end{ques}

As far as we are aware, the answer to Question \ref{q:npc} is not known for any choice of $M$. Theorem \ref{thm:main} is of course a positive answer to Question \ref{q:npc} for the case that $M=T^n$ is a flat torus.
	
\subsection{Outline of the proof} Let $f:T^n\rightarrow T^n$ be a bijection preserving lines. The engine of the the proof is that $f$ preserves the number of intersections of two objects. A key observation is that affine geometry of $T^n$ allows for affine objects (lines, planes, etc.) to intersect in interesting ways (e.g. unlike in $\bbR^n$, lines can intersect multiple, even infinitely many, times). 

In Section \ref{sec:planes}, we give a characterization of rational subtori in terms of intersections with lines. This is used to prove that $f$ maps rational subtori to rational subtori. 

Then we start the proof of Theorem \ref{thm:main}. The proof is by induction on dimension, and the base case (i.e. dimension 2) is completed in Sections \ref{sec:q2dim} and \ref{sec:end2dim}. In Section \ref{sec:q2dim}, we start by recalling a characterization of the homology class of a rational line in terms of intersections with other lines. This allows us to associate to $f$ an induced map $A$ on $H_1(T^2)$, even though $f$ is not necessarily continuous. We regard $A$ as the linear model for $f$, and the rest of the section is devoted to proving $f=A$ (up to a translation) on the rational points $\bbQ^2\slash \bbZ^2$. In Section \ref{sec:end2dim}, we finish the proof by showing that $A^{-1}\circ f$ is given by a (group) isomorphism on each factor of $T^2$. We show this isomorphism lifts to a field automorphism of $\bbR$, and hence is trivial. This completes the proof of the 2-dimensional case

Finally in Section \ref{sec:ndim}, we use the base case and the fact that $f$ preserves rational subtori (proven in Section \ref{sec:planes}), to complete the proof in all dimensions.

\subsection{Notation} We will use the following notation for the rest of the paper. Let $n\geq 1$. Then $T=\bbR^n\slash\bbZ^n$ will be the standard affine $n$-torus. If $x\in\bbR^n$, then $[x]$ denotes the image of $x$ in $T$. Similarly, if $X\subseteq \bbR^n$ is any subset, then $[X]$ denotes the image of $X$ in $T$.

\subsection{Acknowledgments} WvL would like to thank Ralf Spatzier for interesting discussions on the geodesic equivalence problem, which sparked his interest in Question \ref{q:genftag}. In addition we would like to thank Kathryn Mann for conversations about the structure of homomorphisms of $S^1$. This work was completed as part of the REU program at the University of Michigan, for the duration of which JS was supported by NSF grant DMS-1045119. We would like to thank the organizers of the Michigan REU program for their efforts.

\section{Rational subtori are preserved}
\label{sec:planes}

\begin{dfn} Let $n\geq 1$ and $1\leq k\leq n$. Then a $k$-\emph{plane} $W$ in $T=\bbR^n\slash\bbZ^n$ is the image in $T$ of an affine $k$-dimensional subspace $V\subseteq \bbR^n$. Let $V_0$ be the translate of $V$ containing the origin. If $V_0$ is spanned by $V_0\cap \bbQ^n$, we say $W$ is \emph{rational}. In this case we also say $W$ is a \emph{rational subtorus} of dimension $k$. If $k=1$, we also say $W$ is a \emph{rational line}.\end{dfn}

The goal of this section is to show:

\begin{prop} Let $n\geq 2$ and $1\leq k\leq n$. Write $T=\bbR^n\slash\bbZ^n$ and let $f:T\to T$ be a bijection that maps lines to lines. Then the image under $f$ of any rational $k$-dimensional subtorus is again a rational $k$-dimensional subtorus.
\label{prop:qsub}
\end{prop}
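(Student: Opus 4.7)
The strategy is to find an intersection-theoretic characterization of rational $k$-subtori---phrased purely via lines, their intersections, and containment---that is automatically preserved by any line-preserving bijection such as $f$. A natural such characterization is: a $k$-plane $W \subseteq T$ is a rational $k$-subtorus if and only if $W$ contains $k$ linearly independent rational lines through a common point. This follows directly from the definition of rationality: an integer basis for $V_0$ provides $k$ independent rational line directions, and conversely. Granting this, it suffices to show that $f$ preserves (i) $k$-planes and (ii) rational lines; linear independence of lines through a common point amounts to non-containment in a common $(k-1)$-plane, which is therefore covered by (i).

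For (i) I would argue by induction on $k$. The base case $k = 1$ is the hypothesis. For the inductive step, a $(k+1)$-plane $P$ containing a $k$-plane $W$ can be described as the union of lines in a fixed transverse direction, each meeting $W$ in a single point; preservation of this construction by $f$ follows from the inductive hypothesis on $k$-planes together with preservation of lines.

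For (ii), the appropriate criterion depends on the ambient dimension. In $T=\bbR^2/\bbZ^2$ the criterion is transparent: a line $\ell$ is rational if and only if there exists $\ell'\neq \ell$ with $|\ell\cap \ell'|$ finite and positive, because irrational lines are dense in $T^2$ and therefore meet every transverse line in infinitely many points. In $T=\bbR^n/\bbZ^n$ with $n\geq 3$ this direct criterion fails---a rational line can meet an irrational one in a single point---so one instead works inductively: first identify rational $2$-subtori $S\subset T$ intrinsically (as $2$-planes in which the intersection pattern of lines reproduces that of $T^2$), then declare a line $\ell\subset T$ rational iff it lies in some rational $2$-subtorus and is rational there by the $T^2$ criterion.

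The main obstacle is setting up the characterization of rational $2$-subtori inside $T$ without circular reasoning. Unlike in $\bbR^n$, where a plane is the unique affine span of any triple of non-collinear points, in $T$ any two points are joined by infinitely many distinct lines and ``affine span'' is not a priori well defined. The argument must carefully exploit the combinatorics of line intersections---cardinalities, parallelism, and containment relations---to build an intrinsic characterization that is robust to the lack of continuity of $f$.
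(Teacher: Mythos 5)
There is a genuine gap, and it sits exactly where the real work of the paper happens: your step (i). Describing a $(k+1)$-plane $P$ as the union of the lines through points of a $k$-plane $W$ in a fixed transverse direction is fine on the domain side (for rational $W$ one can even choose the direction so that each line meets $W$ exactly once; for a general direction a line in a torus meets $W$ several or infinitely many times, so already the description needs care). But applying $f$ to this decomposition yields only a union of lines, one through each point of the $k$-plane $f(W)$, and nothing in your hypotheses forces these image lines to be parallel, to share a direction modulo $W$, or to have coplanar union: ``preservation of this construction'' is precisely the statement to be proved, and it does not follow from the inductive hypothesis plus line-preservation. The paper closes this hole with a nontrivial mechanism you would need some substitute for: it spans the rational $k$-subtorus $S$ by two rational $(k-1)$-subtori $S_1,S_2$ meeting in a $(k-2)$-subtorus, takes the $k$-plane $W$ spanned by $f(S_1)$ and $f(S_2)$ as the candidate image, and then exploits a countable-versus-uncountable dichotomy: only countably many lines join two fixed points of $T$, yet for $x\in S$ there is an open ball $B\subseteq S_2$ --- uncountably many points $y$ --- each joined to $x$ by a line meeting $S_1$; projecting along $f(S_1)$ (i.e., to $T\slash U_1$, which requires knowing $f(S_1)$ is compact, hence the induction is run on \emph{rational} subtori, not arbitrary planes) forces $f(x)\in W$, and the reverse inclusion comes from running the argument on $f^{-1}$ (see Claim \ref{claim:planes1}). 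Note also that your use of (i) to transport linear independence tacitly applies plane-preservation to a possibly irrational common $(k-1)$-plane; this is repairable (the span of rational lines through a point is rational), but as written it outruns what any induction of this type establishes.

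The second gap is the one you flag yourself: the intrinsic characterization of rational $2$-subtori in $T^n$, $n\geq 3$, is never constructed, so preservation of rational lines remains open in your scheme. It is worth recording how the paper dissolves this difficulty: it goes through codimension one rather than dimension two. A codimension-$1$ plane in $T^n$ is either compact (rational) or dense, and a dense codimension-$1$ plane meets \emph{every} non-parallel line infinitely often, in any ambient dimension --- so the finite-intersection dichotomy that you correctly observe fails for lines when $n\geq 3$ survives verbatim for hypersurface subtori (Lemma \ref{lem:qcrit} and Claim \ref{claim:qcodim1}). Every rational $k$-subtorus is then a component of an intersection of rational codimension-$1$ subtori; since $f(S)$ is already known to be a $k$-plane contained in the intersection of the rational images, it must be a component of that intersection, hence rational. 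So the remedy for the failure you identified is to dualize (cut by rational hypertori and intersect) rather than to span up from rational surfaces; the latter route would require precisely the characterization you could not supply.
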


For the rest of this section, we retain the notation of Proposition \ref{prop:qsub}. We start with the following criterion for subtori to be rational:

\begin{lem} Let $V$ be a plane in $T$. Then the following are equivalent:
\begin{enumerate}[(i)]
	\item $V$ is rational.
	\item $V$ is compact.
	\item Every rational line not contained in $V$ intersects $V$ at most finitely many times.
\end{enumerate}
\label{lem:qcrit}
\end{lem}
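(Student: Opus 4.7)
The plan is to prove (i) $\Leftrightarrow$ (ii) and then close the cycle with (i) $\Rightarrow$ (iii) and (iii) $\Rightarrow$ (i). After translating $V$ to pass through the origin (which does not affect any of the conditions), (i) $\Leftrightarrow$ (ii) reduces to the standard fact that the image of a subspace $V_0 \subset \bbR^n$ in $T^n$ is closed (equivalently, compact) if and only if $V_0 \cap \bbZ^n$ is a lattice of full rank in $V_0$, which is exactly rationality of $V_0$.

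The implication (i) $\Rightarrow$ (iii) is a short group-theoretic argument: given rational $V$ and a rational line $\ell$ with $\ell \not\subseteq V$, either $\ell \cap V = \emptyset$ or we translate by a common point so that both become subtori passing through $[0]$. Then $V \cap \ell$ is a closed subgroup of the circle $\ell$, proper since $\ell \not\subseteq V$, and therefore finite.

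The substantive direction (iii) $\Rightarrow$ (i) I would prove by contrapositive: assuming $V$ is not rational, construct a rational line $\ell$ not contained in $V$ with $\ell \cap V$ infinite. Let $W$ be the smallest rational subspace of $\bbR^n$ containing $V_0$, so that $\dim W > \dim V_0$ by non-rationality and $V$ is dense in the rational subtorus $[W + v]$. Pick a rational direction $u \in W \setminus V_0$---available because rational vectors span $W$ while $V_0 \subsetneq W$---and set $\ell = [\bbR u + v]$. That $\ell \not\subseteq V$ follows from the fact that the preimage in $\bbR^n$ of $V$ is the disjoint union of closed affine translates $V_0 + v + z$ for $z \in \bbZ^n / (V_0 \cap \bbZ^n)$: the connected line $\bbR u + v$ would otherwise have to lie in a single translate, forcing $u \in V_0$, a contradiction.

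The crux, and main obstacle, is then showing $\ell \cap V$ is infinite. The intersection corresponds to a subgroup $S$ of the circle $\ell \cong \bbR/T\bbZ$ (where $T > 0$ is the period of $\ell$), so $S$ is either finite or dense, and it suffices to rule out finiteness; equivalently, to produce an element of $S$ whose lift to $\bbR$ is not a rational multiple of $T$. The expected strategy is to exploit that $[V_0]$ is dense in $[W]$: working in the quotient torus $[W]/[\bbR u]$ and applying Kronecker's equidistribution theorem to the projection of $[V_0]$, one should conclude that $\ell \cap V$ is dense in $\ell$. The technical heart is to choose $u$ so that the projection does not collapse the irrationality of $V_0$, which requires the rational direction $u$ to interact nontrivially with $V_0$'s irrationality structure inside the rational subspace $W$.
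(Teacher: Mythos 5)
Your treatment of (i) $\Leftrightarrow$ (ii) and of the finiteness direction matches the paper: the paper proves (ii) $\Rightarrow$ (iii) by exactly your argument (translate so that $\ell$ and $V$ both pass through $0$; then $\ell\cap V$ is a closed proper subgroup of the circle $\ell$, hence finite). For the converse you take a genuinely different route from the paper: the paper works inside the closure $\overline{V}$, a subtorus foliated by parallel copies of $V$, and counts crossings of local leaves by a rational line in $\overline{V}$ in a foliation chart; you instead pass to the rational hull $W$ of $V_0$ and choose a rational direction $u\in W\setminus V_0$. Everything you do up to and including $\ell\not\subseteq V$ is correct. But the crux, $|\ell\cap V|=\infty$, is precisely what you leave unproven, and this is a genuine gap, not a deferrable verification.

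In fact no choice of $u$ (nor of base point) can close it in general, because the implication (iii) $\Rightarrow$ (i) fails once $V$ has codimension at least $2$ in $\overline{V}$. Take $n=3$ and $V=[\bbR v]$ with $v=(1,\sqrt{2},\sqrt{3})$: this $V$ is irrational and dense in $T^3$, so its hull is $W=\bbR^3$. If a rational line $[\bbR u+w]$, with $u\in\bbZ^3$ primitive, met $V$ in two distinct points, subtracting the two incidence relations would give $\tau v-\sigma u\in\bbZ^3$ with $\tau\neq 0$; eliminating $\sigma$ from the three coordinate congruences and using the $\bbQ$-linear independence of $1,\sqrt{2},\sqrt{3}$ forces $\tau=0$, a contradiction. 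So every rational line meets this $V$ at most once, condition (iii) holds, and (i) fails. Note that density is no help here: this $V$ projects densely to every rational quotient, yet the Kronecker-type recurrence you hope for never materializes, because density of $\pi(V)$ says nothing about the fibre over $\pi(\ell)$ --- which is exactly the ``technical heart'' you flagged. Your plan (and the lemma as stated) is salvageable exactly when $\dim\overline{V}=\dim V+1$: then, with $\Lambda=W\cap\bbZ^n$ and $\lambda:W\to\bbR$ a functional with kernel $V_0$, the group $\lambda(\Lambda)$ is dense in $\bbR$ (otherwise $V$ would be compact), and the incidence condition $t\,\lambda(u)\in\lambda(\Lambda)$ has infinitely many solutions yielding distinct intersection points. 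It is worth knowing that the paper's own proof of (iii) $\Rightarrow$ (ii) has the same soft spot: the assertion that $\psi(\ell\cap U)$ ``intersects all leaves $\bbR^k\times y$'' is valid only when the transverse dimension is $1$, i.e.\ in the codimension-one situation just described --- which happens to be the only case the paper actually uses later (in Claim \ref{claim:qcodim1}, where $f(S)$ has codimension $1$ and the density argument is run directly).
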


\begin{proof} The equivalence of (i) and (ii) is well-known. Let us prove (ii)$\implies$(iii). First suppose that $V$ is compact and let $\ell$ be any rational line not contained in $V$. If $\ell\cap V=\emptyset$, we are done. If $v_0\in \ell\cap V$, we can translate $\ell$ and $V$ by $-x_0$, so that we can assume without loss of generality that $\ell$ and $V$ intersect at 0. Then $\ell\cap V$ is a compact subgroup of $T$ and not equal to $\ell$. Because $\ell$ is 1-dimensional, any proper compact subgroup of $\ell$ is finite. This proves $\ell$ and $V$ intersect only finitely many times.

Finally we prove (iii)$\implies$(ii). Suppose that $V$ is not compact. Then $\overline{V}$ is a compact torus foliated by parallel copies of $V$. Let
	$$\psi:U\rightarrow \bbR^k\times \bbR^{n-k}$$
be a chart near $0\in T$ such that the slices $\bbR^k\times \{y\}, \, y\in \bbR^{n-k}$, correspond to the local leaves of the foliation. Since $V$ is dense in $\overline{V}$, there are infinitely many values of $y$ such that $\bbR^k\times y\subseteq \psi(V\cap U)$.

Now choose a rational line $\ell$ in $\overline{V}$ that is not contained in $V$ but with $0\in\ell$. If the neighborhood $U$ above is chosen sufficiently small, then $\psi(\ell\cap U)$ intersects all leaves $\bbR^k\times y$. Since there are infinitely many values of $y$ such that $\bbR^k\times y\subseteq \psi(V)$, it follows that $\ell\cap V$ is infinite. \end{proof}

Recall that we are trying to show that the image under $f$ of any rational subtorus is again a rational subtorus. We first show the image is a plane.

\begin{claim} Let $S$ be any rational subtorus of dimension $k$. Then $f(S)$ is a $k$-plane.
\label{claim:planes1} 
\end{claim}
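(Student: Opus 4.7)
My approach is by induction on $k$. The base case $k=1$ is immediate since a rational $1$-subtorus is a line and $f$ sends lines to lines by hypothesis.

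For the inductive step, let $S$ be a rational $k$-subtorus with $k \geq 2$, fix $p \in S$, and decompose $S = A \oplus B$ as a direct sum of rational subtori through $p$ with $\dim A = k-1$ and $\dim B = 1$. By the inductive hypothesis, $P_A := f(A)$ is a $(k-1)$-plane and $\ell' := f(B)$ is a line, both through $p' := f(p)$. Injectivity of $f$ forces $\ell' \not\subseteq P_A$: if $f(B) \subseteq f(A)$, then $B \subseteq f^{-1}(f(A)) = A$, contradicting $A \cap B = \{p\}$ and $\dim B \geq 1$. Since $\ell'$ and $P_A$ share $p'$, the direction of $\ell'$ is transverse to the tangent space of $P_A$, and together they span a unique $k$-plane $P$ through $p'$.

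It remains to show $f(S) = P$. Write $S = \bigsqcup_{b \in B}(b + A)$; by the inductive hypothesis applied to each coset $b + A$ (a rational $(k-1)$-subtorus), $P_b := f(b+A)$ is a $(k-1)$-plane, so $f(S) = \bigcup_{b \in B} P_b$, and each $P_b$ contains $f(b) \in \ell'$. The heart of the argument is to show that each $P_b$ is a parallel translate of $P_A$ based at $f(b)$, so that $\bigcup_{b \in B} P_b$ fills exactly $P$. Toward this, I would analyze rational lines $m \subset S$ transverse to $A$: such a line meets each coset $b+A$ in a definite finite number of points depending on the direction of $m$ relative to $A$, and $f$ preserves these counts. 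The images $f(m \cap (b+A))$ are collinear on $f(m)$ and sit in the respective $P_b$. Playing off many such lines $m$ of varying rational directions in $S$, the collinearity constraints should force the tangent spaces of the $P_b$'s to coincide with that of $P_A$. The reverse inclusion $P \subseteq f(S)$ then follows from the parameterization, since the parallel translates of $P_A$ by points of $\ell'$ sweep out all of $P$.

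The main obstacle is the parallelism step: absent continuity of $f$, I cannot conclude that $b \mapsto P_b$ varies continuously, so the rigidity of the family $\{P_b\}_{b \in B}$ must be extracted purely from combinatorial preservation of line incidences. An additional subtlety is that the $(k-1)$-planes $P_b$ are not a priori rational, which limits a direct application of the rationality criterion of Lemma \ref{lem:qcrit}(iii) to bound $f(m) \cap P_b$ from the target side; the bounds are only available on the source. It is likely that the argument requires exploiting several distinct splittings $S = A \oplus B$ and cross-referencing the resulting constraints on the $P_b$ to force the needed common tangent space.
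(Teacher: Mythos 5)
Your setup (induction on $k$, splitting off a codimension-one rational subtorus, spanning a candidate $k$-plane $P$ from the images of the pieces) matches the paper's proof in outline, but the step you yourself flag as the ``main obstacle'' --- showing that the uncountable family of $(k-1)$-planes $P_b = f(b+A)$ all share the tangent space of $P_A$ --- is precisely the content of the claim, and your proposal does not contain an argument for it. The difficulty you identify is real: parallelism of $b \mapsto P_b$ is a continuity-type statement, $f$ has no continuity, and your incidence-counting scheme stalls exactly where you say it does, since Lemma \ref{lem:qcrit}(iii) gives finiteness of line-plane intersections only for \emph{rational} planes, and the $P_b$ are not known to be rational at this stage (rationality of images is only established afterwards, in Claim \ref{claim:qcodim1}). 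So as written this is a genuine gap, not a routine verification.

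The missing idea in the paper is a cardinality argument that sidesteps parallelism entirely, and it is worth internalizing. The paper takes \emph{two overlapping} codimension-one rational subtori $S_1, S_2 \subseteq S$ meeting in a $(k-2)$-subtorus (rather than your foliation by parallel cosets $b+A$); their images span the candidate plane $W$. Then, for each individual point $x \in S \setminus (S_1 \cup S_2)$, one chooses an open ball $B \subseteq S_2$ such that every $y \in B$ is joined to $x$ by a line meeting $S_1$. Applying $f$ and then projecting by $\pi : T \to T/U_1$, where $U_1$ is the direction of $f(S_1)$, collapses $f(S_1)$ to a point and $W$ to a line. Now count: through the two points $\pi(f(x))$ and $\pi(f(x_0))$ there pass only \emph{countably} many lines, and each meets the line $\pi(W)$ in at most countably many points unless it is contained in $\pi(W)$; but the images $\pi(f(y))$, $y \in B$, form an \emph{uncountable} set of such intersection points, forcing $\pi(f(x)) \in \pi(W)$, i.e.\ $f(x) \in W$. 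The reverse inclusion is then the same argument applied to $f^{-1}$ (which also preserves lines, being the inverse of a line-preserving bijection) --- not a sweeping argument, which in your scheme again depends on the unproved parallelism. The lesson is that absent continuity, the rigidity here comes from countable-versus-uncountable incidence counts after quotienting, not from finite intersection numbers alone; some such input would be needed to complete your version as well.
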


\begin{proof}
We induct on $k=\dim(S)$. The base case $k=1$ is just the assertion that $f$ maps lines to lines.

\par
Suppose now $k\geq 2$ and that the claim holds for $l$-planes where $l<k$, and let $S$ be a rational $k$-dimensional subtorus. Choose some $x_0\in S$ and  let $S_0:=S-x_0$ be the translate of $S$ passing through 0. Let $V_0\subseteq \bbR^n$ be the subspace that projects to $S_0$. Since $S$ is rational, we can choose a basis $v_1,\dots,v_k$ of $V$ with $v_i\in\bbQ^n$ for every $1\leq i\leq k$.

To use the inductive hypothesis, consider the $(k-1)$-plane 
	$$V_{1}=\Span\{v_{1},...,v_{k-1}\}$$ 
and similarly $V_{2}=\Span\{v_{2},...,v_{k}\}$. For $i=1,2$, set $S_i:=[V_i+x_0]$. Since $v_i$ are rational vectors for each $i$, we clearly have that $S_1$ and $S_2$ are rational $(k-1)$-dimensional subtori. Also let $S_{12}:=S_1\cap S_2$ be the intersection, which is a $(k-2)$-dimensional rational subtorus. The inductive hypothesis implies that $f(S_1)$ and $f(S_2)$ and $f(S_{12})$ are all rational subtori of $T$. 

Since $f(S_1)$ and $f(S_2)$ are $(k-1)$-planes that intersect in a $(k-2)$-plane, they span a $k$-dimensional plane. More precisely, let $W_1$ (resp. $W_2$) be the subspace of $\bbR^n$ that projects to $f(S_1)-f(x_0)$ (resp. $f(S_2)-f(x_0)$). Then $W_1$ and $W_2$ are $(k-1)$-dimensional subspaces of $\bbR^n$ that intersect in a $(k-2)$-dimensional subspace, and hence span a $k$-dimensional subspace $W_0$. Then the $k$-plane $W:=[W_0]+f(x_0)$ contains both $f(S_1)$ and $f(S_2)$.

We claim that $f(S)=W$. We start by showing the inclusion $f(S)\subseteq W$. Let $x\in S$. If $x\in S_1\cup S_2$, then clearly $f(x)\in W$, so we will assume that $x\in S$ but $x\notin S_1\cup S_2$.

	\begin{figure}[h]
		\centering
		\includegraphics[height=3in]{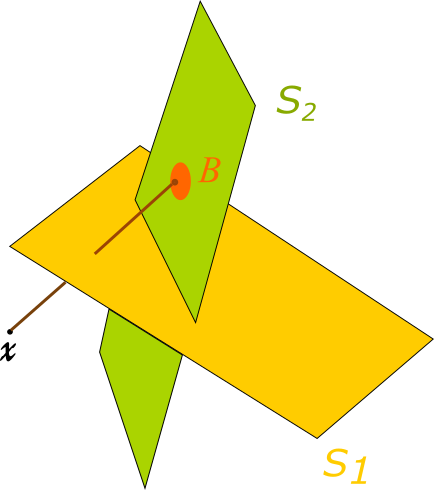}
		\caption{A ball $B\subseteq S_2$ such that lines from $x$ to $B$ meet $S_1$.}
		\label{fig:isec}
	\end{figure}

Since $S_1$ and $S_2$ are closed codimension 1 submanifolds of $S$, there is an open ball $B\subseteq S_2$ such that for any $y\in B$, there is a line joining $x$ and $y$ that intersects $S_1$ (see Figure \ref{fig:isec}). Set $Y:=f(B)$. Since $f$ sends lines to lines and preserves intersections, $f(x)$ is a point with the property that for any $y\in Y$, there is a line $\ell_y$ joining $f(x)$ and $y$ that intersects $f(S_1)$. We claim that this implies that $f(x)\in W$.

Write $U_1:=f(S_1)-f(x_0)$ for the translate of $f(S_1)$ that passes through the origin. We can regard $U_1$ as a subgroup of $T$, and consider the projection
	$$\pi: T\rightarrow T\slash U_1.$$
The image $\pi(W)$ of $W$ is a line because $f(S_1)$ has codimension 1 in $W$. Further $\pi(f(x))$ is a point with the property that for any $y\in Y$, the line $\pi(\ell_y)$ joins the point $\pi(f(S_1))=\pi(f(x_0))$ and $\pi(f(x))$, and intersects $\pi(W)$ at the point $\pi(y)$.

Note that there are only countably many lines from $\pi(f(x_0))$ to $\pi(f(x))$, and unless one of them is contained in $\pi(W)$, each one has at most countably many intersections with the line $\pi(W)$. However, since $\pi(Y)$ is uncountable, we conclude that not every point $\pi(y)$ can lie on a line that passes through both $\pi(f(x_0))$ and $\pi(f(x))$, unless $\pi(f(x))\in \pi(W)$. Therefore we must have that $\pi(f(x))\in\pi(W)$, so that $f(x)\in W$. This proves that $f(S)\subseteq W$.

\par 
To establish the reverse inclusion, we just apply the same argument to $f^{-1}$. The above argument then yields that $f^{-1}(W)\subseteq S$. Applying $f$ gives $W\subseteq f(S)$, as desired. \end{proof}

Actually the above proof also shows the following more technical statement, which basically states that linearly independent lines are mapped to linearly independent lines. We will not need this until Section \ref{sec:ndim}, but it is most convenient to state it here.

%
%
%
%
%
%
%
%

\begin{lem} Let $S=[V]$ be a rational subtorus containing 0, where $V\subseteq \bbR^n$ is a subspace. For $1\leq i\leq k:=\dim(S)$, let $v_i\in\bbQ^n$ such that $v_1,\dots,v_k$ is a basis for $V$. Set $\ell_i:=[\bbR v_i]$ and choose $w_i\in \bbQ^n$ such that $f(\ell_i)=[\bbR w_i]$. Then 
	$$f(S)=[\Span(w_1,\dots,w_k)].$$ 
	\label{lem:span} \end{lem}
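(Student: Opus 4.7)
The plan is to prove this by induction on $k = \dim(S)$, mirroring the argument of Claim \ref{claim:planes1} while keeping track of which specific subtorus is produced. The base case $k=1$ is immediate, since $f(\ell_1) = [\bbR w_1]$ is the defining property of $w_1$.

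For the inductive step, I would re-run the construction of Claim \ref{claim:planes1} with the choice $x_0 = 0 \in S$. Setting $V_1 = \Span(v_1, \ldots, v_{k-1})$ and $V_2 = \Span(v_2, \ldots, v_k)$, the rational $(k-1)$-dimensional subtori $S_i := [V_i]$ are exactly the ones to which the inductive hypothesis applies, with rational bases $v_1, \ldots, v_{k-1}$ and $v_2, \ldots, v_k$ respectively. This produces
\[
f(S_1) = [\Span(w_1, \ldots, w_{k-1})], \qquad f(S_2) = [\Span(w_2, \ldots, w_k)],
\]
both of which pass through $0$. By Claim \ref{claim:planes1}, $f(S)$ is a $k$-plane $W$ containing $f(S_1) \cup f(S_2)$; since $W$ then contains $0$, it has the form $W = [W_0]$ for some linear $k$-dimensional subspace $W_0 \subseteq \bbR^n$, and $W_0 \supseteq \Span(w_1,\ldots,w_{k-1}) + \Span(w_2,\ldots,w_k) = \Span(w_1, \ldots, w_k)$.

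The remaining step is to verify that $\Span(w_1, \ldots, w_k)$ is genuinely $k$-dimensional, which would then force $W_0 = \Span(w_1, \ldots, w_k)$ and complete the induction. I would argue by contradiction: the inductive hypothesis already gives $\dim \Span(w_1, \ldots, w_{k-1}) = k - 1$, so the only alternative is $\dim \Span(w_1, \ldots, w_k) = k-1$, which would force $\Span(w_1, \ldots, w_{k-1}) = \Span(w_2, \ldots, w_k)$ and hence $f(S_1) = f(S_2)$; this contradicts $S_1 \neq S_2$ (which holds because $v_1 \notin V_2$) together with the bijectivity of $f$. I do not expect any significant obstacle here: Claim \ref{claim:planes1} already carries out all of the delicate intersection-counting work, and this lemma is essentially a bookkeeping refinement of that argument.
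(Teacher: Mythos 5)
Your proof is correct and takes essentially the same route as the paper: the paper's one-line proof of Lemma \ref{lem:span} just observes that the induction in the proof of Claim \ref{claim:planes1}, run with $x_0=0$, yields $W=\Span(w_1,\dots,w_k)$, and your argument is precisely that bookkeeping induction made explicit. Your only (harmless) deviation is in how you certify $\dim\Span(w_1,\dots,w_k)=k$: the paper's Claim gets this from the $(k-2)$-dimensional intersection $f(S_{12})$, while you get it from injectivity of $f$ applied to $S_1\neq S_2$.
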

	\vspace{-.5cm}
\begin{proof} Recall that (with the notation of the proof of Claim \ref{claim:planes1}), we have
	$$W=\Span(w_1,\dots,w_k),$$
	and we have shown $f(S)=[W]$, as desired.
\end{proof}

It remains to show that if $S$ is a rational subtorus, then $f(S)$ is also rational. We first show this for $S$ of codimension 1.

\begin{claim}
\label{claim:qcodim1}
Let $S \subseteq T$ be a rational codimension 1 subtorus. Then $f(S)$ is also rational. \end{claim}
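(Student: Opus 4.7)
The plan is to argue by contradiction: assuming $f(S)$ is not rational, I derive a contradiction by comparing the cardinalities of $\ell \cap S$ and $f(\ell) \cap f(S)$ for a well-chosen rational line $\ell$.

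By Claim \ref{claim:planes1}, $f(S)$ is already a codimension-$1$ plane; write $f(S) = f(x_0) + [V_0]$ with $V_0 \subset \bbR^n$ a codimension-$1$ subspace. Supposing $V_0$ is not spanned by rational vectors, any normal $n_0 \in \bbR^n$ to $V_0$ fails to be proportional to an integer vector; consequently the additive subgroup $G := \{ \langle m, n_0 \rangle : m \in \bbZ^n \} \subset \bbR$ is dense.

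The central observation I would establish is the following trichotomy: for $P = p_0 + [V_0]$ with $V_0$ irrational of codimension $1$, and any line $\ell' = [u + \bbR v'] \subset T$, the intersection $\ell' \cap P$ is either empty, all of $\ell'$, or infinite. Indeed, $[u + tv'] \in P$ iff $\langle u - p_0, n_0 \rangle + t \langle v', n_0 \rangle \in G$. If $\langle v', n_0 \rangle = 0$ (that is, $v' \in V_0$), the condition is independent of $t$, so $\ell'$ is either contained in or disjoint from $P$. Otherwise, by density of $G$ the $t$-solutions form a dense subset of $\bbR$, which, modulo the period of $\ell'$ (if $\ell'$ is rational) or via injectivity of its parameterization (if $\ell'$ is irrational), yields infinitely many intersection points. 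In particular, $|\ell' \cap P|$ is never a finite positive integer.

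To close the argument I would exhibit a rational line $\ell$ transverse to $S$ and meeting $S$ in finitely many but at least one point. Let $n_S \in \bbZ^n$ be a primitive integer normal to the direction $V'$ of $S$, fix $x \in S$, and set $\ell := [x + \bbR n_S]$. A direct computation in the spirit of Lemma \ref{lem:qcrit} gives $|\ell \cap S| = |n_S|^2$, which is finite and positive. Since $f$ is a bijection, $|f(\ell) \cap f(S)| = |\ell \cap S| = |n_S|^2$. This contradicts the trichotomy above applied to $\ell' = f(\ell)$ and $P = f(S)$, forcing $V_0$ to be rational.

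The main obstacle is establishing the trichotomy uniformly in $\ell'$: at this stage of the paper we cannot yet assume that the line $f(\ell)$ is rational, so the argument has to handle an arbitrary direction $v'$. The density of $G$ in $\bbR$ is exactly what makes this uniform treatment possible.
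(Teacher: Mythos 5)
Your proof is correct and follows essentially the same route as the paper: both argue by contradiction, using that the bijection $f$ preserves intersection cardinalities, that the rational codimension-$1$ subtorus $S$ meets a suitable rational line in a finite, nonzero number of points, and that a non-rational codimension-$1$ plane is dense and hence meets every transverse line infinitely often. Your dense-subgroup computation with the normal vector $n_0$ simply makes explicit the density step that the paper asserts via Lemma \ref{lem:qcrit} and the closure $\overline{f(S)}=T$, and your choice of the normal line $\ell=[x+\bbR n_S]$ with $|\ell\cap S|=|n_S|^2>0$ neatly guarantees the nonempty intersection the paper needs when it says that $f(\ell)$ and $f(S)$ intersect at least once.
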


\begin{proof} Let $S\subseteq T$ be a codimension 1 rational subtorus and let $\ell$ be any rational line not contained in $S$. By Lemma \ref{lem:qcrit} applied to $S$, we see that $\ell\cap S$ is finite. Then $f(\ell)$ and $f(S)$ also intersect only finitely many times. 

Suppose now that $f(S)$ is not rational. Then $\overline{f(S)}$ is a connected compact torus properly containing the codimension 1 plane $f(S)$, and therefore $\overline{f(S)}=T$, i.e. $f(S)$ is dense in $T$. But if $f(S)$ is dense, then it intersects any line that is not parallel to $S$ infinitely many times. We know that $f(\ell)$ is not parallel to $f(S)$, because $f(\ell)$ and $f(S)$ intersect at least once. On the other hand, $f(\ell)$ and $f(S)$ intersect finitely many times. This is a contradiction. \end{proof}

Finally we can finish the proof of Proposition \ref{prop:qsub}.

\begin{proof}[Proof of Proposition \ref{prop:qsub}] First note that if $S_1$ and $S_2$ are rational subtori, then any component of $S_1\cap S_2$ is also rational (e.g. by using that rationality is equivalent to compactness). 

Now let $S$ be any rational subtorus of codimension $l$. Then we can choose $l$ rational codimension 1 subtori $S_1,\dots,S_l$ such that $S$ is a component of $\cap_i S_i$. Since $f$ is a bijection, we have
	$$f(S)\subseteq f\left(\bigcap_{1\leq i\leq l} S_i\right)=\bigcap_{1\leq i\leq l} f(S_i)$$
and by Claim \ref{claim:qcodim1}, we know that $f(S_i)$ are rational. Therefore $f(S)$ is a codimension $l$-plane contained in $\cap_i f(S_i)$. The components of $\cap_i f(S_i)$ have codimension $l$, so we must have that $f(S)$ is a component, and hence is rational.\end{proof}

\section{The 2-dimensional case: Affinity on rational points}
\label{sec:q2dim}

The goal of this section is to prove a rational version Theorem \ref{thm:main} in the two-dimensional case. More precisely, under the assumptions of Theorem \ref{thm:main}, we will prove that there is a linear automorphism $A$ of $T^2$ such that, up to a translation of $T^2$, we have $f=A$ on $\bbQ^2\slash\bbZ^2$. In the next section, we will complete the proof of the two-dimensional case by proving that, up to a translation, $f=A$ on all of $T^2$. 

We start by recalling the following elementary computation of the number of intersections of a pair of rational lines.

\begin{prop} Let $\ell_1$ and $\ell_2$ be two affine rational lines in the torus. For $i=1,2$, let $v_i \in \bbZ^2$ be a primitive tangent vector to the translate of $\ell_i$ passing through $0\in T^2$. Then the number of intersections of $\ell_1$ and $\ell_2$ is given by
	$$|\ell_1\cap \ell_2|= \left| \det\begin{pmatrix} 
| & | \\
v_1 & v_2 \\
| & |
 \end{pmatrix}\right|.$$
 \label{prop:inr}
 \end{prop}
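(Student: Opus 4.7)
My plan is to reduce the intersection count to an elementary lattice computation. First I would dispose of the degenerate case where $v_1$ and $v_2$ are parallel: then $v_1=\pm v_2$, so the lines either coincide or are disjoint, and in both cases $\det(v_1,v_2)=0$, so the formula holds (reading the left side as zero when the lines are disjoint). Assuming then that $v_1,v_2$ are linearly independent, we have $\bbR v_1+\bbR v_2=\bbR^2$, which forces $\ell_1\cap\ell_2\ne\emptyset$: writing $\ell_i=p_i+[\bbR v_i]$, the difference $p_1-p_2$ lies in the subgroup $[\bbR v_1]+[\bbR v_2]=T^2$, which produces an intersection point. Translating $T^2$ by such a point, I may therefore assume both $\ell_i$ pass through $[0]$, so each $\ell_i$ is a closed one-parameter subgroup of $T^2$.

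Next, primitivity of $v_i$ means that the homomorphism $\bbR\to T^2$, $t\mapsto[tv_i]$, has kernel $\bbZ$ and descends to a bijection $\bbR/\bbZ\xrightarrow{\sim}\ell_i$. Under these identifications,
$$|\ell_1\cap\ell_2|=\#\bigl\{t\in\bbR/\bbZ:\exists s\in\bbR,\ tv_1-sv_2\in\bbZ^2\bigr\}.$$
Letting $M=(v_1\mid v_2)$ denote the $2\times 2$ integer matrix with columns $v_1,v_2$, the condition $tv_1-sv_2\in\bbZ^2$ becomes $(t,-s)^T\in M^{-1}\bbZ^2$, so the count equals the index of $\bbZ$ in the first-coordinate projection $\pi_1(M^{-1}\bbZ^2)\subseteq\bbR$.

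The final step is to evaluate this projection. Using the cofactor formula for $M^{-1}$ and writing $v_2=(b,d)^T$, one finds $\pi_1(M^{-1}\bbZ^2)=\tfrac{\gcd(b,d)}{|\det M|}\bbZ$. Primitivity of $v_2$ gives $\gcd(b,d)=1$, so $\pi_1(M^{-1}\bbZ^2)=\tfrac{1}{|\det M|}\bbZ$, which has index $|\det M|=|\det(v_1,v_2)|$ over $\bbZ$. This yields the desired formula. The argument is essentially computational once the translation reduction is in place, and I do not expect any real obstacle; the only point worth noting is that the two primitivity hypotheses enter in different places (that of $v_1$ to parametrize $\ell_1$ by $\bbR/\bbZ$, that of $v_2$ to make $\gcd(b,d)=1$).

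Alternatively, one could argue homologically: $\ell_i$ represents the class $v_i\in H_1(T^2;\bbZ)\cong\bbZ^2$, so the algebraic intersection number of $\ell_1$ and $\ell_2$ is $\det(v_1,v_2)$; since closed geodesics on a flat torus realize the minimal geometric intersection number in their free homotopy class, this algebraic count equals $|\ell_1\cap\ell_2|$.
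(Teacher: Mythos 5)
Your argument is correct and complete. The paper itself gives no proof of Proposition \ref{prop:inr} --- it is merely ``recalled'' as an elementary computation --- so there is nothing to compare against; your lattice-index argument is exactly the kind of proof the authors had in mind, and both places where primitivity is used (to get the parametrization $\bbR/\bbZ\xrightarrow{\sim}\ell_1$, and to get $\gcd(b,d)=1$) are correctly identified. The reduction to lines through the origin via $[\bbR v_1]+[\bbR v_2]=T^2$ is also right. The homological alternative is likewise standard and valid, though as written it elides the fact that the \emph{minimal} geometric intersection number of the classes $v_1,v_2$ equals $|\det(v_1,v_2)|$ (equivalently, that distinct closed geodesics on a flat torus meet transversally with all intersections of the same sign); if you use that route you should cite or prove this.

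One small point of bookkeeping: in the parallel case with $\ell_1=\ell_2$, the left-hand side is an infinite set while the determinant is $0$, so the formula does not literally hold there. This is a defect of the proposition's statement rather than of your proof --- the paper only ever applies the formula to pairs of distinct lines (and implicitly treats coincident lines as contributing their tangent direction, not a count) --- but it would be cleaner to state explicitly that you assume $\ell_1\neq\ell_2$, rather than to claim the formula ``holds in both cases.''
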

%
%
%

We now turn towards proving Theorem \ref{thm:main} in the two-dimensional case. For the rest of this section, suppose $f: T^2\to T^2$ is a bijection that maps lines to lines. Also let us fix the following notation: For $i=1,2$, set $\ell_i:=\begin{bmatrix} \bbR e_i\end{bmatrix}$. We first make some initial reductions: By replacing $f$ with 
	$$x\mapsto f(x)-f(0),$$ 
we can assume that $f(0)=0$. For the next reduction, we need the following claim.

\begin{claim} There is a linear automorphism $A:T^2\rightarrow T^2$ such that $A \ell_i = f(\ell_i)$.\label{claim:reduct}\end{claim}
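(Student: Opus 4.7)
The plan is to produce $A$ as an element of $\mathrm{GL}_2(\bbZ)$ whose columns are primitive integer tangent vectors to $f(\ell_1)$ and $f(\ell_2)$; the key point will be to verify that these columns form a $\bbZ$-basis of $\bbZ^2$, which is equivalent to their determinant being $\pm 1$.

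First I would use the results of Section \ref{sec:planes} to pin down $f(\ell_i)$. Since $\ell_i = [\bbR e_i]$ is a rational line, Proposition \ref{prop:qsub} (applied with $k=1$) guarantees that $f(\ell_i)$ is again a rational line. Because $0 \in \ell_1 \cap \ell_2$ and we have normalized so that $f(0)=0$, each $f(\ell_i)$ passes through $0$, hence is of the form $f(\ell_i) = [\bbR w_i]$ for some primitive $w_i \in \bbZ^2$.

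Next I would count intersections. The two coordinate circles $\ell_1$ and $\ell_2$ meet in exactly one point, namely the origin (this is either immediate from $[xe_1]=[ye_2]\Rightarrow x,y\in\bbZ$, or a direct application of Proposition \ref{prop:inr} with $v_i=e_i$). Because $f$ is a bijection, $f(\ell_1)\cap f(\ell_2) = f(\ell_1\cap \ell_2)$ also consists of a single point. Applying Proposition \ref{prop:inr} now to the pair $f(\ell_1), f(\ell_2)$ gives
\[
1 \;=\; |f(\ell_1)\cap f(\ell_2)| \;=\; \left|\det(w_1\ w_2)\right|,
\]
so $w_1, w_2$ form a $\bbZ$-basis of $\bbZ^2$.

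Finally, I would define $A \in \mathrm{GL}_2(\bbZ)$ to be the matrix with columns $w_1$ and $w_2$. This descends to a linear automorphism of $T^2$, and by construction $A e_i = w_i$, so $A\ell_i = A[\bbR e_i] = [\bbR w_i] = f(\ell_i)$, as required. I do not anticipate any serious obstacle here: all the hard work has already been invested in Proposition \ref{prop:qsub} and in the intersection formula of Proposition \ref{prop:inr}. The only conceptual content is the observation that bijectivity of $f$ together with line-preservation forces $f$ to preserve intersection counts, which combined with Proposition \ref{prop:inr} converts the geometric fact $|\ell_1\cap\ell_2|=1$ into the arithmetic unimodularity needed to build $A$.
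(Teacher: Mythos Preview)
Your argument is correct and essentially identical to the paper's: both pick primitive integer direction vectors for the rational lines $f(\ell_1),f(\ell_2)$ through the origin, use bijectivity to transfer the single intersection $\ell_1\cap\ell_2=\{0\}$ to $|f(\ell_1)\cap f(\ell_2)|=1$, and then invoke Proposition~\ref{prop:inr} to conclude the resulting matrix has determinant $\pm1$. The only cosmetic difference is that the paper writes the vectors as $\begin{pmatrix}p_i\\q_i\end{pmatrix}$ with $p_i,q_i$ coprime rather than calling them $w_i$.
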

\begin{proof} Since $f(0)=0$, the lines $f(\ell_i)$ pass through 0. In addition, because $\ell_i$ are rational, so are $f(\ell_i)$ (see Proposition \ref{prop:qsub}). Therefore there are coprime integers $p_i$ and $q_i$ such that 
	$$f(\ell_i)=\begin{bmatrix}\bbR \begin{pmatrix} p_i \\ q_i\end{pmatrix}\end{bmatrix}.$$ 
Note that $\ell_1$ and $\ell_2$ intersect exactly once, and hence so do $f(\ell_1)$ and $f(\ell_2)$. By Proposition \ref{prop:inr}, the number of intersections is also given by
	$$|f(\ell_1)\cap f(\ell_2)|=\left| \det\begin{pmatrix} p_1 & p_2 \\ q_1 & q_2\end{pmatrix}\right|,$$
so the linear transformation $A:T^2\rightarrow T^2$ with matrix
	$$A=\begin{pmatrix} p_1 & p_2 \\ q_1 & q_2 \end{pmatrix}$$
is an automorphism, and clearly satisfies $A\ell_i = f(\ell_i)$.\end{proof}

Let $A$ be as in Claim \ref{claim:reduct}. Then by replacing $f$ by $A^{-1}\circ f$, we can assume that $f(\ell_i)=\ell_i$ for $i=1,2$. Note that because each $v_i$ is only unique up to sign, the matrix $A$ is not canonically associated to $f$. Therefore we cannot expect that $A^{-1}\circ f=\text{id}$, and we have to make one further reduction to deal with the ambiguity in the definition of $A$. Let us introduce the following notation: For a rational line $\ell=\begin{bmatrix}\bbR\begin{pmatrix} p \\ q\end{pmatrix}\end{bmatrix}$, let \iell \, be the line obtained by reflecting $\ell$ in the $x$-axis, i.e. \iell\,=$\begin{bmatrix} \bbR \begin{pmatrix} p \\ -q\end{pmatrix} \end{bmatrix}$. 
 \begin{claim}
\label{claim:alt}
 Let $\ell$ be a rational line in $T^2$ passing through the origin. Then $f(\ell)=\ell$ or $f(\ell)=\iell$.
  \end{claim}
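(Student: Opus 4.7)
The plan is to use the intersection number formula from Proposition \ref{prop:inr} together with the normalization $f(\ell_1)=\ell_1$ and $f(\ell_2)=\ell_2$ to pin down $f(\ell)$ up to the two-fold sign ambiguity.

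First, since $f$ sends lines to lines, $f(0)=0$, and $f$ preserves rational lines by Proposition \ref{prop:qsub}, we know $f(\ell)$ is a rational line through the origin. So we may write
\[
\ell = \left[\mathbb{R}\begin{pmatrix} p \\ q\end{pmatrix}\right], \qquad f(\ell) = \left[\mathbb{R}\begin{pmatrix} p' \\ q'\end{pmatrix}\right],
\]
with $\gcd(p,q)=1$ and $\gcd(p',q')=1$.

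Next I would extract $|p|,|q|,|p'|,|q'|$ from intersection counts. Applying Proposition \ref{prop:inr} to $\ell$ and $\ell_1=[\mathbb{R} e_1]$ gives $|\ell\cap\ell_1|=|q|$, and similarly $|\ell\cap\ell_2|=|p|$. Since $f$ is a bijection mapping lines to lines, it preserves cardinalities of intersections of lines; combined with $f(\ell_i)=\ell_i$, this yields
\[
|q'|=|f(\ell)\cap\ell_1|=|\ell\cap\ell_1|=|q|,\qquad |p'|=|f(\ell)\cap\ell_2|=|\ell\cap\ell_2|=|p|.
\]

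Finally, from $(p',q')=(\pm p,\pm q)$ I would conclude. Observe that a primitive integer vector and its negative span the same line in $T^2$, so we may choose the sign of $(p',q')$ so that $p'=p$ (if $p=0$ then $\ell=\ell_2$ and the claim is trivial). Then $q'=\pm q$: if $q'=q$ we get $f(\ell)=\ell$; if $q'=-q$ we get $f(\ell)=\iell$ by definition of $\iell$. There is essentially no obstacle here, as the argument is a direct application of Proposition \ref{prop:inr}; the only subtlety is the sign-ambiguity bookkeeping (which in fact is precisely the source of the $\ell$ versus $\iell$ dichotomy in the statement), and it is handled by the normalization $p'=p$ indicated above.
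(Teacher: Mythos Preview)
Your proof is correct and follows essentially the same route as the paper: write $\ell$ and $f(\ell)$ in primitive integer coordinates, use Proposition~\ref{prop:inr} together with $f(\ell_i)=\ell_i$ to deduce $|p'|=|p|$ and $|q'|=|q|$, and then resolve the remaining sign ambiguity exactly as the dichotomy $\ell$ versus $\iell$. The only cosmetic difference is that the paper uses $(r,s)$ for the image coordinates and phrases the conclusion as an equality of one-dimensional subspaces, while you normalize $p'=p$ first; neither adds or removes content.
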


\begin{proof} Let $p,q$ be coprime integers such that $\ell=\begin{bmatrix} \bbR\begin{pmatrix} p \\ q\end{pmatrix} \end{bmatrix}$. Also choose coprime integers $r,s$ such that $f(\ell)=\begin{bmatrix}\bbR\begin{pmatrix} r\\ s\end{pmatrix}\end{bmatrix}$. We can compute $p,q,r,s$ as suitable intersection numbers. Indeed, we have
	$$|p|=\left| \det\begin{pmatrix} 
p & 0 \\ q & 1
 \end{pmatrix}\right| = |\ell\cap \ell_2|.$$
 
 Since $f(\ell_2)=\ell_2$ and $f$ preserves the number of intersections of a pair of lines, we see that
 	$$|p|=|\ell\cap \ell_2|=|f(\ell)\cap \ell_2| = |r|,$$
and similarly $|q|=|s|$. Therefore we find that
	$$\bbR \begin{pmatrix} p \\ q\end{pmatrix} = \bbR \begin{pmatrix} r \\ s \end{pmatrix} \hspace{1 cm} \text{or} \hspace{1 cm} \bbR \begin{pmatrix} p \\ -q\end{pmatrix} = \bbR\begin{pmatrix} r \\ s \end{pmatrix},$$
	which exactly corresponds to $\ell=f(\ell)$ or $\iell=f(\ell)$.\end{proof}
We will now show that whichever of the two alternatives of Claim \ref{claim:alt} occurs does not depend on the line $\ell$ chosen.	
\begin{claim}
\label{claim:onlyone} Let $\ell$ be a rational line passing through the origin that is neither horizontal nor vertical (i.e. $\ell\neq \ell_i$ for $i=1,2$). Suppose that $f(\ell)=\ell$. Then for any rational line $m$ passing through the origin, we have $f(m)=m$.
\end{claim}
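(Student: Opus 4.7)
The plan is to use Claim \ref{claim:alt} together with the intersection number formula of Proposition \ref{prop:inr}, exploiting the hypothesis that $\ell$ has a nonzero $x$-component and a nonzero $y$-component.

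First, I would dispose of the easy case: if $m=\ell_1$ or $m=\ell_2$, then $m$ is fixed under reflection across the $x$-axis, so $\iell[m]=m$, and the reduction already gives $f(m)=m$. So it remains to handle a rational line $m$ through the origin that is neither horizontal nor vertical. Write $\ell=[\bbR(a,b)^T]$ and $m=[\bbR(p,q)^T]$ with $a,b,p,q$ all nonzero integers (here I use that $\ell$ is neither horizontal nor vertical, by hypothesis). By Claim \ref{claim:alt}, either $f(m)=m$ or $f(m)=\iell[m]=[\bbR(p,-q)^T]$, and I want to rule out the second possibility.

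The second possibility is ruled out by comparing two intersection numbers. By Proposition \ref{prop:inr},
\[
|\ell\cap m|=|aq-bp|,\qquad |\ell\cap \iell[m]|=|aq+bp|.
\]
A direct calculation gives $(aq-bp)^2-(aq+bp)^2=-4abpq$, so these two intersection numbers are equal if and only if $abpq=0$. Since $a,b,p,q$ are all nonzero, they are distinct. On the other hand, $f$ preserves intersection numbers and fixes $\ell$ setwise by hypothesis, so $|\ell\cap m|=|f(\ell)\cap f(m)|=|\ell\cap f(m)|$. Therefore $f(m)$ cannot equal $\iell[m]$, and the only remaining alternative is $f(m)=m$.

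I do not expect any real obstacle: the whole argument is a one-line intersection-number calculation combined with the dichotomy of Claim \ref{claim:alt}. The only thing to be careful about is the assumption on $\ell$ being \emph{neither} horizontal nor vertical, which is exactly what ensures $a,b\neq 0$ and thus that the two intersection numbers $|aq\pm bp|$ actually differ whenever $m$ is also neither horizontal nor vertical.
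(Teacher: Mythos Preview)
Your argument is correct and follows essentially the same route as the paper: use the dichotomy of Claim~\ref{claim:alt}, compute the two intersection numbers $|aq\pm bp|$ via Proposition~\ref{prop:inr}, and observe that $|aq-bp|=|aq+bp|$ forces $abpq=0$, which is impossible when both $\ell$ and $m$ are neither horizontal nor vertical. The paper argues by contradiction and phrases the conclusion slightly differently (showing $r=0$ or $s=0$ forces $m$ to be a coordinate line, where the two alternatives coincide), but the content is identical.
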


\begin{proof} Let $p,q$ coprime integers such that $\ell=\begin{bmatrix}\bbR\begin{pmatrix} p \\ q\end{pmatrix}\end{bmatrix}$. Since $\ell$ is neither vertical nor horizontal, we know that $p$ and $q$ are nonzero.

Now let $m$ be any other rational line, and choose coprime integers $r,s$ such that $m=\begin{bmatrix}\bbR \begin{pmatrix} r\\ s\end{pmatrix}\end{bmatrix}$. The number of intersections of $\ell$ and $m$ is given by 
	$$|\ell\cap m| = \left| \det \begin{pmatrix} p & r \\ q & s\end{pmatrix}\right| = |ps-qr|.$$
We will argue by contradiction, so suppose that $f(m)\neq m$. Using the other alternative of Claim \ref{claim:alt} for $m$, we have that 
	$$|f(\ell)\cap f(m)|= \left| \det \begin{pmatrix} p & r \\ q & -s\end{pmatrix}\right| = |ps+qr|.$$
Since $f$ preserves the number of intersections, we therefore have
	$$|ps-qr|=|ps+qr|$$
which can only happen if $ps=0$ or $qr=0$. Since we know that $p$ and $q$ are nonzero, we must have $r=0$ or $s=0$. This means that $m$ is horizontal or vertical,  but in those cases the alternatives provided by Claim \ref{claim:alt} coincide. \end{proof}

We can now make the final reduction: If $f(\ell)=\ell$ for any rational line $\ell$, then we leave $f$ as is. In the other case, i.e. $f(\ell)=\iell$ for any rational line $\ell$, we replace $f$ by
	$$\begin{pmatrix} 1 & 0 \\ 0 & -1 \end{pmatrix} \circ f,$$
after which we have $f(\ell)=\ell$ for any rational line $\ell$ passing through the origin. After making these initial reductions, our goal is now to prove that $f=\text{id}$ on the rational points $\bbQ^2\slash \bbZ^2$. We start with the following easy observation:

\begin{claim} \label{claim:||} If $\ell$ and $m$ are parallel lines in $T$, then $f(\ell)$ and $f(m)$ are also parallel.\end{claim}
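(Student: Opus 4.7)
The plan is to characterize parallelism in $T^2$ purely in terms of the disjointness of two distinct lines, and then read off the claim from the fact that bijections preserve disjointness.

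First I would establish the following dichotomy: for two distinct lines $\ell, m \subseteq T^2$, one has $\ell \cap m = \emptyset$ if and only if $\ell$ and $m$ are parallel. The forward direction (``disjoint implies parallel'') is the substantive point: if $\ell = [p+\bbR w]$ and $m = [q+\bbR v]$ with $w,v$ linearly independent, then $(t,s)\mapsto tw - sv$ is a linear isomorphism of $\bbR^2$, so we can solve $tw - sv = q - p$, producing an intersection point $[p+tw]\in\ell\cap m$. The reverse direction (``parallel and intersecting implies equal'') is the observation that if $\ell = [p+\bbR w]$ and $m = [q+\bbR w]$ share a point, then $q \in p+\bbR w + \bbZ^2$, so $m = \ell$ after choosing the appropriate lift of $q$. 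Both verifications are routine linear algebra modulo $\bbZ^2$.

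With this dichotomy in hand the claim is immediate. Given parallel lines $\ell$ and $m$: if $\ell = m$ then $f(\ell) = f(m)$, so assume $\ell \neq m$. The dichotomy gives $\ell\cap m = \emptyset$, and since $f$ is a bijection we have $f(\ell)\cap f(m) = f(\ell\cap m) = \emptyset$ together with $f(\ell)\neq f(m)$. Applying the dichotomy in the reverse direction to the (distinct, disjoint) lines $f(\ell)$ and $f(m)$ then forces them to be parallel.

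I do not anticipate any serious obstacle. The only feature worth flagging is that the dichotomy exploits the low dimension of $T^2$: in $T^n$ with $n\geq 3$, two non-parallel lines can easily be disjoint, so this argument is genuinely two-dimensional. That is, however, exactly the setting of the current section, so the argument applies as stated.
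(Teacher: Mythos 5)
Your proposal is correct and is essentially the paper's own argument: the paper likewise observes that in dimension $2$ two lines are parallel if and only if they are disjoint (or equal), and that a bijection sending lines to lines preserves this. You simply spell out the routine linear-algebra verification of that dichotomy, which the paper leaves implicit.
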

\begin{proof} Since $\dim(T)=2$, two lines in $T$ are parallel if and only if they do not intersect. This property is obviously preserved by $f$.\end{proof}
Combining our reductions on $f$ so far with Claim \ref{claim:||} yields the following quite useful property for $f$: Given any rational line $\ell$, the line $f(\ell)$ is parallel to $\ell$. However, we still do not know the behavior of $f$ in the direction of $\ell$ or transverse to $\ell$. In particular we do not know
	\begin{itemize}
		\item whether or not $f$ leaves invariant every line $\ell$; we only know this for $\ell$ passing through the origin, and
		\item whether or not $f=\text{id}$ on rational lines $\ell$ passing through the origin.
	\end{itemize}
We can now prove:
\begin{thm}
\label{theorem:rationals}
For any rational point $x\in \mathbb{Q}^2/\mathbb{Z}^2 $ we have $f(x)=x$.
\end{thm}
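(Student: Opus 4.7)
The plan is to extract from the reductions a diagonal description of $f$ on rationals and then argue that this diagonal action must be the identity. The first three steps are relatively mechanical, while the final step is the main obstacle.

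First, I would synchronize the two axis restrictions: let $g:=f|_{\ell_1}$ and $h:=f|_{\ell_2}$, identified with self-bijections of $\bbR\slash\bbZ$. Using that $f$ preserves the slope-$1$ line through the origin setwise together with Claim~\ref{claim:||}, for any rational $a$ the slope-$1$ line through $(a,0)$ meets $\ell_2$ in the single point $(0,-a)$; its image, the slope-$1$ line through $(g(a),0)$, meets $\ell_2$ in $(0,-g(a))$. Matching intersections gives $h(-a)=-g(a)$, and the analogous slope-$(-1)$ computation gives $h(a)=g(a)$, so $g=h$.

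Next, the parallelogram argument: any rational $(a,b)$ is the unique intersection of the horizontal line through $(0,b)$ and the vertical line through $(a,0)$, and by parallelism-preservation these map to the horizontal line through $(0,g(b))$ and the vertical line through $(g(a),0)$, whose unique intersection is $(g(a),g(b))$. Hence $f(a,b)=(g(a),g(b))$ on rationals. Taking the slope-$1$ line through $(a,0)$, which contains $(a+t,t)$ and whose image is the slope-$1$ line through $(g(a),0)$, comparing parametrizations shows $g(a+t)=g(a)+g(t)$, so $g$ is a group automorphism of $\bbQ\slash\bbZ$.

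Finally, I must conclude $g=\mathrm{id}$. This is the part I expect to be most delicate: the automorphism group $\Aut(\bbQ\slash\bbZ)\cong\widehat{\bbZ}^{\times}$ is enormous, and elementary constraints from intersection counts among rational lines alone seem insufficient, since many nontrivial automorphisms remain consistent with the combinatorics of torsion-point intersections between rational lines. I anticipate that the finishing step exploits a subtler feature: either a clever configuration of several rational lines through a common low-order torsion point whose combined intersection pattern forces $g(1/n)=1/n$ for every $n$, or a rigidity argument using the bijectivity of $f$ on all of $T^2$ together with the fact that the same $g$ must control $f$ on every rational line through the origin simultaneously. This final step is where I expect the crux of the argument to lie.
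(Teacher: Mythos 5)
Your first three steps are correct, and in fact they reproduce (for all points, not just rational ones) the reductions the paper itself makes at the \emph{start of Section \ref{sec:end2dim}}: there the paper writes $f(x,y)=(\sigma(x),\sigma(y))$ using exactly your vertical/horizontal parallelogram argument and the slope-$1$ diagonal, and Claim \ref{claim:hom} establishes additivity of $\sigma$ (your step with the slope-$1$ line through $(a,0)$; one should add the small remark that an abstract automorphism of $\bbR/\bbZ$ carries $n$-torsion to $n$-torsion, which is why $g$ restricts to an automorphism of $\bbQ/\bbZ$). But these reductions are precisely the part of the argument that carries no arithmetic content. The entire content of Theorem \ref{theorem:rationals} is your unproven step 4, that the induced automorphism $g\in\Aut(\bbQ/\bbZ)\cong\widehat{\bbZ}^{\times}$ is trivial, and there you offer only two conjectural strategies. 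A reduction of the statement to ``$g=\mathrm{id}$'' with the key step deferred is a genuine gap, not a proof.

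For comparison, the paper's route through this crux is your first guessed alternative, a configuration of rational lines through low-order torsion points: it fixes $n>2$, sets $Q=[\frac1n,\frac1n]$, writes $f(v_{1/n})=v_{1/n}+[\delta,0]$ with $\delta$ an a priori arbitrary real in $(-\frac1n,\frac{n-1}{n})$, and chases the images of four points $P,Q,R,S$ lying on the lines of slopes $n-1$, $1$, $\frac{1}{n-1}$, $-1$ through the origin (all $f$-invariant by Claim \ref{claim:onlyone}) together with horizontals and verticals; integrality of the resulting correction terms $k_P=n\delta$ and $k_R=\delta(1+\frac{1}{n-1})$, with $|k_R|<1$, forces $\delta=0$, and Claim \ref{claim:fqq} then propagates $f(Q)=Q$ to $f=\mathrm{id}$ on $[\frac1n\bbZ^2]$. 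Note also that your skepticism in step 4 is mathematically well-founded and should be taken seriously by anyone completing the argument: if $g$ is \emph{any} abstract automorphism of $\bbR/\bbZ$ (e.g.\ acting by a unit $u\in\widehat{\bbZ}^{\times}$ on the torsion subgroup $\bbQ/\bbZ$ and trivially on a divisible complement), then $(g,g)$ maps every line of rational slope bijectively to a line of the same rational slope, since $g(a+qt)=g(a)+qg(t)$ and $g(b+pt)=g(b)+pg(t)$ with $g(t)$ ranging over all of $\bbR/\bbZ$. So incidence and intersection-count data of rational lines alone cannot distinguish $g$ from the identity; any completion of your step 4 must either use the hypothesis that $f$ preserves lines of \emph{irrational} slope (as the paper's Section \ref{sec:end2dim} does, via the lifted map $\widetilde{\sigma}$ being a field automorphism of $\bbR$) or scrutinize very carefully which constraints in a torsion-point configuration are actually forced. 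This is exactly why your final step is the crux, and without it the proposal does not establish the theorem.
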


\begin{proof} For $n>2$, set
	$$G_{n}=\begin{bmatrix} \dfrac{1}{n} \, \bbZ^2\end{bmatrix}.$$
Clearly we have $\bbQ^2\slash\bbZ^2=\cup_n G_n$, so it suffices to show that $f=\text{id}$ on $G_n$ for every $n> 2$. We purposefully exclude the case $n=2$ because $G_2$ only consists of 4 points, and this is not enough for the argument below. However, since we have $G_2\subseteq G_6$, this does not cause any problems. Fix $n\geq 2$.

Set $Q:=\begin{bmatrix} \dfrac{1}{n}\begin{pmatrix} 1 \\ 1\end{pmatrix}\end{bmatrix}.$

\begin{claim} If $f(Q)=Q$, then $f=\text{id}$ on $G_n$. \label{claim:fqq} \end{claim}
\begin{proof} Suppose that $x=\begin{bmatrix} x_1 \\ y_1 \end{bmatrix}$ is any point of $G_n$ with $f(x)\neq x$. We can choose $x_1$ and $y_1$ that lie in $[0,1)$. We will show that $f(Q)\neq Q$. 

Let $x_2, y_2\in [0,1)$ be such that $f(x)=\begin{bmatrix} x_2 \\ y_2\end{bmatrix}$. Then there are two cases: Either $x_1\neq x_2$ or $y_1\neq y_2$. Clearly the entire setup is symmetric in the two coordinates, so we will just consider the case $x_1\neq x_2$. See Figure \ref{fig:grids} for an illustration of the various lines introduced below.

For $a\in[0,1)$, let $v_a$ (resp. $h_a$) denote the vertical (resp. horizontal) line $\begin{bmatrix} x=a\end{bmatrix}$ (resp. $\begin{bmatrix} y=a\end{bmatrix}$). Since $f$ maps parallel lines to parallel lines by Claim \ref{claim:||} and $f(\ell_i)=\ell_i$ by assumption, we know that $f$ maps vertical (resp. horizontal) lines to vertical (resp. horizontal) lines. In particular we have that $f(v_{x_1})=v_{x_2}$ and $f(h_{y_1})=h_{y_2}$. We will show that $f(Q)\neq Q$ by showing that $f(h_{1/n}) \neq h_{1/n}$.

\begin{figure}[h]
		\centering
		\includegraphics[height=3in]{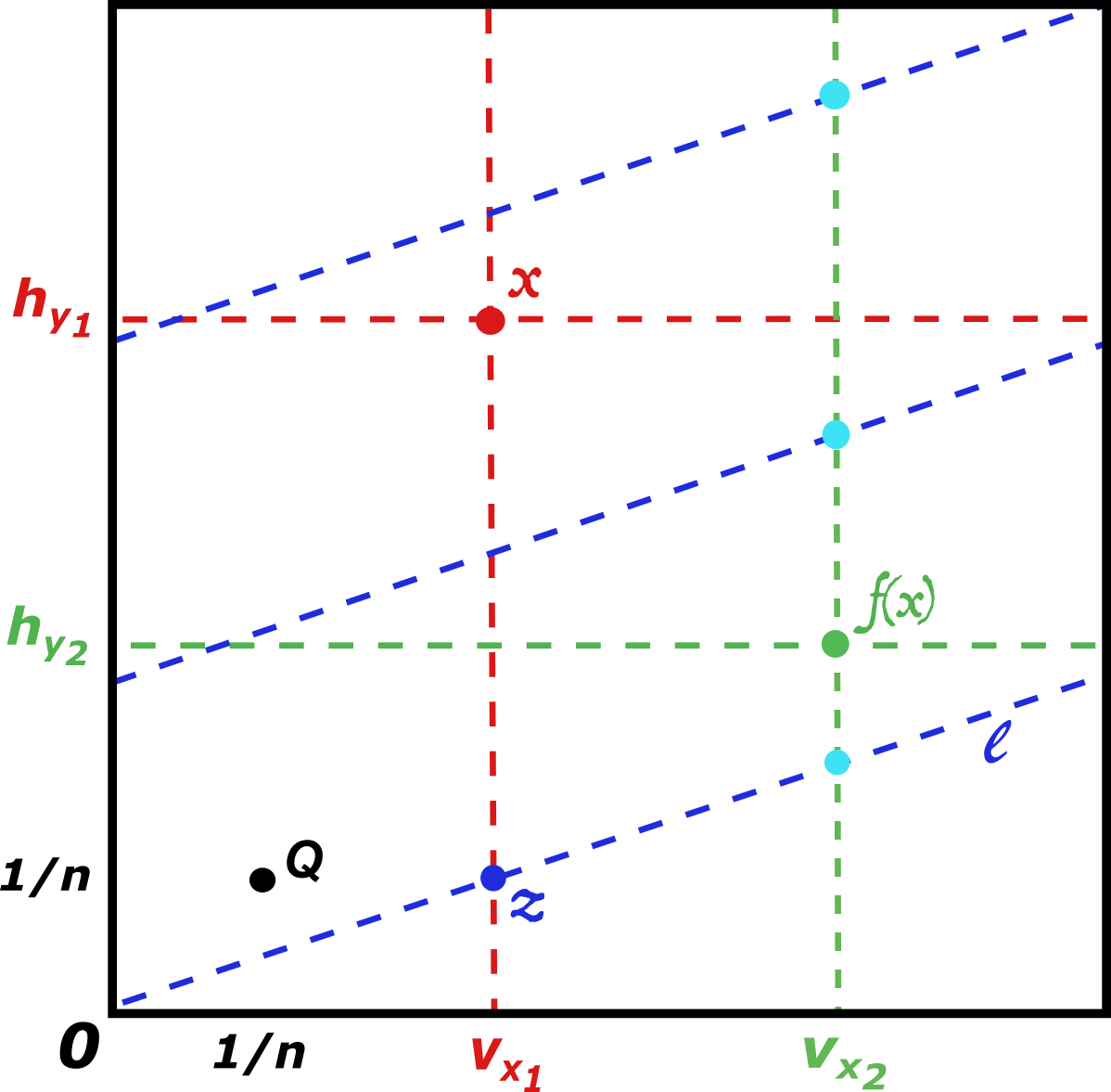}
		\caption{Illustration of the proof of Claim \ref{claim:fqq}. The points in \textcolor{aqua}{aqua} are the possibilities for $f(z)$. The proof amounts to showing all of these lie at different heights than $z$ does.}
		\label{fig:grids}
	\end{figure}

Consider the point $z:=\begin{bmatrix}  x_1 , \dfrac{1}{n} \end{bmatrix}\in T^2$. Since $x \in G_n$, there is an integer $0\leq a<n$ such that $x_1 = \dfrac{a}{n}$. First note that $a>0$: For if $a=0$, then $x\in \ell_2$. Since $f(\ell_2)=\ell_2$, we see that $x_2=0$ as well. This contradicts the initial assumption that $x_1\neq x_2$.

Since $a>0$, $z$ lies on the line $\ell$ of slope $\dfrac{1}{a}$ going through the origin. By Claim \ref{claim:onlyone} we have $f(\ell)=\ell$, so $f(z)\in\ell$. 

In addition, since $z\in v_{x_1}$, we have $f(z)\in v_{x_2}$. An easy computation yields
	$$\ell\cap v_{x_2} = \begin{bmatrix} x_2 \\ \frac{x_2}{a} \end{bmatrix} + \frac{1}{a}\begin{bmatrix} 0 \\ \bbZ\end{bmatrix}.$$
Therefore we automatically have $f(h_{\frac{1}{n}})\neq h_{\frac{1}{n}}$ unless 
	$$\frac{1}{a} x_2 + \frac{k}{a} \equiv \frac{1}{n} \mod \bbZ$$
for some integer $k$ with $0\leq k<a$. Solving for $k$ yields
	$$k\equiv \frac{a}{n}-x_2 \mod a\bbZ.$$
Since $0\leq k<a$ and $0<a<n$ and $0\leq x_2<1$, we see that the only option is that $k=0$. Hence $x_2\equiv \frac{a}{n}\equiv x_1\mod \bbZ$, which contradicts our initial assumption that $x_1\neq x_2$.
\end{proof}

It remains to show that $f(Q)=Q$. Consider the points $P, R, S$ as shown in Figure \ref{fig:quad}. Let us give a brief sketch of the idea so that the subsequent algebraic manipulations are more transparent. 

\begin{figure}[h]
		\centering
		\includegraphics[height=3in]{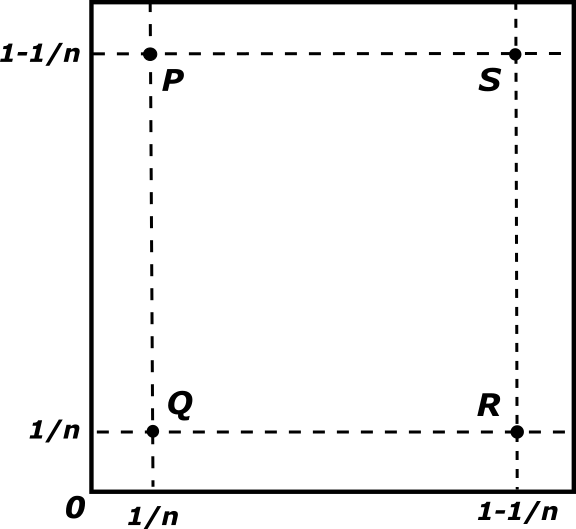}
		\caption{Quadrilateral in $T^2$ with vertices $P,Q,R,S$.}
		\label{fig:quad}
	\end{figure}

Because $P$ and $Q$ lie on a vertical line, and $f$ maps vertical lines to vertical lines, $f(P)$ and $f(Q)$ have to lie on the same vertical line. Because $P$ lies on the line of slope $n-1$ through the origin, $f(P)$ also has to lie on this line. Therefore however much $P$ and $f(P)$ differ in the horizontal direction determines how much they differ in the vertical direction. Finally since $P$ and $S$ lie on a horizontal line, the vertical difference between $P$ and $f(P)$ equals the vertical difference between $S$ and $f(S)$. The upshot of this is that any difference between $Q$ and $f(Q)$ translates into information about the vertical difference between $S$ and $f(S)$.

The same reasoning with $P$ replaced by $R$ yields information about the horizontal difference between $S$ and $f(S)$. Finally because $S$ lies on a line of slope 1 through the origin, any horizontal difference between $S$ and $f(S)$ matches the vertical difference. Therefore the information gained about the difference between $S$ and $f(S)$ using the two methods (once using $P$ and once using $R$) has to match. We will see that this forces $Q=f(Q)$.

Let us now carry out the calculations. Choose $\delta \in \left(\dfrac{-1}{n},\dfrac{n-1}{n}\right)$ such that
	$$f(v_{\frac{1}{n}})=v_{\frac{1}{n}}+\begin{bmatrix} \delta \\ 0\end{bmatrix},$$
so that $f(P)$ and $f(Q)$ have first coordinate $\frac{1}{n}+\delta$.  Note that the boundary cases $\delta=-\frac{1}{n}$ and $\delta=\frac{n-1}{n}$ are impossible since these correspond to $f(v_{\frac{1}{n}})=\ell_2$. Since $Q$ lies on the line of slope $1$ through the origin, we have
	\begin{equation} f(Q)=
	\begingroup
		\renewcommand*{\arraystretch}{2}
		\begin{bmatrix} \dfrac{1}{n}+\delta \\ \dfrac{1}{n}+\delta\end{bmatrix}
	\endgroup
 	\end{equation}
Since $P$ lies on the line of slope $n-1$ through the origin, there is some integer $k_P$ such that
	\begin{equation} f(P)=
		\begingroup
			\renewcommand*{\arraystretch}{2}
			\begin{bmatrix} \dfrac{1}{n}+\delta \\ \dfrac{n-1}{n}+\delta(n-1)-k_P\end{bmatrix}.
		\endgroup
		\label{eq:fp} 
	\end{equation}
Here we can choose $k_P$ such that 
	$$0\leq \frac{n-1}{n}+\delta(n-1)-k_P < 1.$$
Using that $R$ lies on the line of slope $\frac{1}{n-1}$ through the origin, we similarly find an integer $k_R$ such that
	$$f(R)=
		\begingroup
			\renewcommand*{\arraystretch}{2}
			\begin{bmatrix} \dfrac{n-1}{n}+\dfrac{\delta}{n-1}-k_R \\ \dfrac{1}{n}+\delta \end{bmatrix}.
		\endgroup
	$$
Finally, using that $S$ lies on the same horizontal line as $P$, and on the same vertical line as $R$, we find
	\begin{equation} f(S)=
		\begingroup
			\renewcommand*{\arraystretch}{2}
			\begin{bmatrix} \dfrac{n-1}{n}+\dfrac{\delta}{n-1}-k_R \\ \dfrac{n-1}{n}+\delta(n-1)-k_P\end{bmatrix}.
		\endgroup
		\label{eq:fs} 
	\end{equation}
Since $S$ lies on the line of slope 1 through the origin, so does $f(S)$. Setting the two coordinates of $f(S)$ given by Equation \ref{eq:fs} equal to each other, we find (after some simple algebraic manipulations):
	\begin{equation} k_P - k_R = \delta\left(n-1-\dfrac{1}{n-1}\right). \label{eq:rel} \end{equation}
We can obtain one more equation relating $k_P$ and $\delta$ by noting that $P$ lies on the line of slope $-1$ through the origin, and hence so must $f(P)$. This means that $f(P)$ is of the form $(x,1-x)$. Using Equation \ref{eq:fp}, this gives after some algebraic manipulations:
	\begin{equation} k_P=n\delta \label{eq:kpd}\end{equation}
We can use Equation \ref{eq:kpd} to eliminate $k_P$ from Equation \ref{eq:rel} to obtain:
	\begin{equation} k_R = \delta\left(1+\frac{1}{n-1}\right). \label{eq:krd}\end{equation}
Using that
	$$-\frac{1}{n}<\delta<\frac{n-1}{n},$$
and $n>2$ one easily sees that
	$$-1<k_R<1.$$
Since $k_R$ is also an integer, this forces $k_R=0$. Combining this with Equation \ref{eq:krd}, we see that $\delta=0$. Therefore $f(Q)=Q$, as desired.\end{proof}
This finishes the proof that $f$ is affine on $\bbQ^2\slash\bbZ^2$. In the next section, we will promote this to the entire torus.

\section{The 2-dimensional case: End of the proof}
\label{sec:end2dim}

At this point we know that the map $f:T^2\to T^2$ is a bijection sending lines to lines, with the additional properties that 
	\begin{itemize}
		\item $f=\text{id}$ on $\bbQ^2\slash\bbZ^2$,
		\item if $\ell$ is a line with rational slope $\alpha$, then $f(\ell)$ is also a line with slope $\alpha$, and
		\item if $\ell$ has irrational slope, then so does $f(\ell)$.
	\end{itemize}
In this section we will prove that $f=\text{id}$ on all of $T^2$. We start with the following observation. Write $T^2=S^1\times S^1$ and for $x,y\in S^1$, write
	$$f(x,y)=(f_1(x,y),f_2(x,y)).$$
Since $f$ maps vertical lines to vertical lines, the value of $f_1(x,y)$ does not depend on $y$. Similarly, the value of $f_2(x,y)$ does not depend on $x$. Hence for $i=1,2$, there are functions $f_i:S^1\to S^1$ such that
	$$f(x,y)=(f_1(x),f_2(y)).$$
We actually have $f_1=f_2$: Indeed, for any $x\in S^1$, consider the point $p:=(x,x)\in T^2$. Since $p$ lies on the line of slope 1 through the origin, so does $f(p)=(f_1(x),f_2(x))$. Hence $f_1=f_2$. We will write $\sigma:S^1\to S^1$ for this map, so that
	$$f(x,y)=(\sigma(x),\sigma(y))$$
for any $x,y\in S^1$. 

Let us now outline the proof that $f=\text{id}$. We first show that $\sigma$ is a homomorphism $S^1\to S^1$ (Claim \ref{claim:hom}), and then that $\sigma$ lifts to a map $\widetilde{\sigma}:\bbR\to\bbR$ (Claim \ref{claim:lift}). We finish the proof by showing that $\widetilde{\sigma}$ is a field automorphism of $\bbR$ and hence trivial (Claims \ref{claim:tildehomog}-\ref{claim:mult}). 

We start with the observation that besides collinearity of points, $f$ preserves another geometric configuration:
	\begin{dfn} A set of 4 points in $T^2$ is a \emph{block} $B$ if the points are the vertices of a square all of whose sides are either horizontal or vertical, i.e. if we can label the points $P,Q,R,S$ such that
		\begin{itemize}
			\item $P$ and $Q$ lie on a horizontal line $h_{x_0}$, and $R$ and $S$ lie on a horizontal line $h_{x_1}$,
			\item $P$ and $R$ lie on a vertical line $v_{x_0}$, and $Q$ and $S$ lie on a vertical line $v_{x_1}$, and
			\item $P$ and $S$ lie on a line of slope 1, and $Q$ and $R$ lie on a line of slope -1.
		\end{itemize}
			\label{dfn:block}
		\end{dfn} 
It is immediate from the fact that $f$ preserves horizontal lines, vertical lines, and lines of slope $\pm 1$, that $f$ preserves blocks. If $B$ is a block, we will denote the block obtained by applying $f$ to the vertices of $B$ by $f(B)$. We have the following useful characterization of blocks:
\begin{lem} Let  $x_0,x_1,y_0,y_1\in S^1$. Then the points $(x_i,y_j)$ where $i,j\in \{1,2\}$ form a block if and only if either
	\begin{enumerate}[(1)]
		\item $x_1-x_0=y_1-y_0$, or
		\item $x_1-x_0=y_0-y_1$.
	\end{enumerate}
	\label{lem:blockcrit}
\end{lem}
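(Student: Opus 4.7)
The plan is to unfold Definition \ref{dfn:block} and reduce it to arithmetic in $S^1=\bbR/\bbZ$. The preliminary fact I would establish first is this: two points $(a_0,b_0),(a_1,b_1)\in T^2$ lie on a common line of slope $1$ (resp.\ slope $-1$) if and only if $b_1-b_0=a_1-a_0$ (resp.\ $b_1-b_0=a_0-a_1$) in $S^1$. This is immediate from the fact that every such line is a translate of the closed $1$-dimensional subtorus spanned by $(1,1)$ (resp.\ $(1,-1)$), so a pair of points lies on such a translate iff their difference projects into that subtorus.

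For the forward direction, I would observe that among the four points $(x_i,y_j)$, there are exactly two pairs differing in both coordinates, namely $\{(x_0,y_0),(x_1,y_1)\}$ and $\{(x_0,y_1),(x_1,y_0)\}$. In any block labeling, $\{P,S\}$ and $\{Q,R\}$ must be precisely these two pairs, since every other pair shares a coordinate and therefore lies on a horizontal or vertical line rather than a line of slope $\pm 1$. Whichever pair plays the role of $\{P,S\}$ (the slope-$1$ diagonal) yields either condition (1) or condition (2) upon applying the slope-$1$ criterion from the preliminary fact; a short check shows the slope-$(-1)$ condition on the other diagonal reduces to the very same equation, so no additional constraint arises.

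For the converse, I would give an explicit block labeling in each case. Under condition (1), take $P=(x_0,y_0)$, $Q=(x_1,y_0)$, $R=(x_0,y_1)$, $S=(x_1,y_1)$. Under condition (2), take $P=(x_0,y_1)$, $Q=(x_1,y_1)$, $R=(x_0,y_0)$, $S=(x_1,y_0)$. In either case the three bullets of Definition \ref{dfn:block} can be verified directly using the slope criterion stated above.

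I do not expect any serious obstacle. The only mildly subtle point is that the differences live in $S^1$ and not in $\bbR$, which is exactly what makes (1) and (2) genuinely distinct alternatives: in $\bbR$ the equations $x_1-x_0=y_1-y_0$ and $x_1-x_0=y_0-y_1$ would together force $y_1=y_0$, whereas on the torus both can hold non-trivially and they give rise to the two families of blocks. Keeping this modular arithmetic straight is the only place where a careless argument might slip.
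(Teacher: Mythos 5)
Your argument is correct. The preliminary fact (two points lie on a common slope-$\pm 1$ line iff their coordinate differences agree up to sign in $S^1=\bbR/\bbZ$) is exactly the right reduction, the forward direction correctly pins down $\{P,S\}$ and $\{Q,R\}$ as the two diagonal pairs because a slope-$1$ line meets each horizontal or vertical line exactly once, and the explicit labelings in the converse check out against all three bullets of Definition \ref{dfn:block}. There is nothing in the paper to compare this to: the paper's proof of Lemma \ref{lem:blockcrit} is literally the placeholder ``To be done,'' so your write-up supplies the missing argument rather than duplicating or diverging from one. The only point worth making explicit is that the forward direction implicitly uses that the four points are distinct (so that a pair sharing a coordinate cannot also lie on a common slope-$\pm1$ line); this is harmless since a ``set of $4$ points'' presupposes distinctness, but it is the same degenerate regime in which the lemma's indexing (and its $\{1,2\}$ versus $\{0,1\}$ typo) would need care anyway.
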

\begin{proof} To be done.\end{proof}
We have the following application of this characterization.
\begin{claim} \label{claim:hom} $\sigma:S^1\to S^1$ is a (group) isomorphism. \end{claim}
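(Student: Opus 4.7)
The plan is to establish two facts separately: bijectivity and additivity. Bijectivity is immediate, since $f$ is a bijection of $T^2$ and $f(x,y)=(\sigma(x),\sigma(y))$: given $u\in S^1$, preimages of $(u,0)$ have the form $(x,y)$ with $\sigma(x)=u$, and the uniqueness of this preimage forces $\sigma$ to be a bijection. So the real task is to prove that $\sigma(a+b)=\sigma(a)+\sigma(b)$ for all $a,b\in S^1$.

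The engine is Lemma \ref{lem:blockcrit} applied to carefully chosen blocks. For any $a,b\in S^1$, the four points $(b,0),(a+b,0),(b,a),(a+b,a)$ form a type (1) block (both horizontal and vertical side lengths equal $a$). Applying $f$ and invoking Lemma \ref{lem:blockcrit} on the image block $(\sigma(b),0),(\sigma(a+b),0),(\sigma(b),\sigma(a)),(\sigma(a+b),\sigma(a))$ yields $\sigma(a+b)-\sigma(b)=\pm\sigma(a)$. Swapping the roles of $a$ and $b$ gives symmetrically $\sigma(a+b)-\sigma(a)=\pm\sigma(b)$.

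I would then introduce the defect $D(a,b):=\sigma(a+b)-\sigma(a)-\sigma(b)\in S^1$. The two block relations force $D(a,b)\in\{0,-2\sigma(a)\}\cap\{0,-2\sigma(b)\}$, so either $D(a,b)=0$ (the desired conclusion) or else $-2\sigma(a)=-2\sigma(b)\neq 0$ in $S^1$, which forces $\sigma(a),\sigma(b)\notin\{0,\tfrac12\}$ together with $\sigma(a)-\sigma(b)\in\{0,\tfrac12\}$. To rule out this bad case I would use two auxiliary observations. Since $f$ preserves the rational lines of slopes $-1$ and $2$ through the origin (both lines being fixed by the reductions of the previous section), one immediately reads off $\sigma(-x)=-\sigma(x)$ and $\sigma(2x)=2\sigma(x)$ for every $x\in S^1$. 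Moreover, the defect evaluated at $a=\tfrac12$ satisfies $D(\tfrac12,c)\in\{0,-2\sigma(\tfrac12)\}=\{0\}$ in $S^1$, so additivity with $\tfrac12$ holds unconditionally.

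With these tools the bad case splits into two subcases. If $\sigma(a)=\sigma(b)$, injectivity of $\sigma$ forces $a=b$, and $D(a,a)=\sigma(2a)-2\sigma(a)=0$ by slope-$2$ preservation, a contradiction. If $\sigma(b)=\sigma(a)+\tfrac12$, then unwinding $D(a,b)=-2\sigma(a)$ gives $\sigma(a+b)=\tfrac12$, and injectivity together with $\sigma(\tfrac12)=\tfrac12$ yields $b=\tfrac12-a$; applying the unconditional additivity at $\tfrac12$ and using $\sigma(-a)=-\sigma(a)$ gives $\sigma(b)=\tfrac12-\sigma(a)$, which combined with $\sigma(b)=\sigma(a)+\tfrac12$ forces $2\sigma(a)=0$, contradicting $\sigma(a)\notin\{0,\tfrac12\}$. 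Hence $D\equiv 0$, so $\sigma$ is additive and is therefore a group isomorphism. The main obstacle throughout is the sign ambiguity produced by the two alternatives in Lemma \ref{lem:blockcrit}; resolving it is precisely where the extra inputs (injectivity, $\sigma|_{\mathbb{Q}/\mathbb{Z}}=\mathrm{id}$, and preservation of all rational-slope lines, in particular slopes $\pm 1$ and $2$) are used.
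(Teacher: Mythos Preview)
Your proof is correct and follows essentially the same route as the paper: apply Lemma~\ref{lem:blockcrit} to a block built from $a,b,a+b$ to obtain $\sigma(a+b)-\sigma(b)=\pm\sigma(a)$, symmetrize, and then eliminate the two residual bad cases $\sigma(a)=\sigma(b)$ and $\sigma(a)=\sigma(b)+\tfrac12$. The only differences are cosmetic streamlinings: you read off $\sigma(2x)=2\sigma(x)$ and $\sigma(-x)=-\sigma(x)$ directly from preservation of the slope-$2$ and slope-$(-1)$ lines through the origin (whereas the paper extracts $\sigma(2x)=2\sigma(x)$ from a second block argument), and you only invoke additivity at the single rational point $\tfrac12$ rather than first proving additivity whenever one argument is rational.
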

\begin{proof} Since $\sigma(0)=0$, it suffices to show that $\sigma(x+y)=\sigma(x)+\sigma(y)$ for all $x,y\in S^1$. We will first show that this holds if $y$ is rational. Consider the block $B$ consisting of the vertices $(0,y),(0,x+y), (x,y),$ and $(x,x+y)$. The two alternatives of Lemma \ref{lem:blockcrit} applied to the block $f(B)$ yield
	\begin{enumerate}[(1)]
		\item $\sigma(x)=\sigma(x+y)-\sigma(y),$ or
		\item $\sigma(x)=\sigma(y)-\sigma(x+y).$
	\end{enumerate}
If (1) holds we are done. If (2) holds we reverse the roles of $x,y$ (note that we have not used yet that $y$ is rational). Again we obtain either $\sigma(y)=\sigma(x+y)-\sigma(x)$, in which case we are done, or $\sigma(y)=\sigma(x)-\sigma(x+y)$. If the latter holds, we find that $2(\sigma(x)-\sigma(y))=0$, so that either $\sigma(x)=\sigma(y)$ or $\sigma(x)=\sigma(y)+\frac{1}{2}$. In either case, using that $y=\sigma(y)$ is rational, we see that $\sigma(x)$ is rational as well, and hence we have $x=\sigma(x)$. This finishes the proof under the additional assumption that $y$ is rational.

We will now show that $\sigma(x+y)=\sigma(x)+\sigma(y)$ for all $x,y$. Note that we only used that $y$ is rational in the above argument to deal with the final two cases, where either $\sigma(x)=\sigma(y)$ or $\sigma(x)=\sigma(y)+\frac{1}{2}$. We consider these two cases separately.

\subsection*{Case 1} ($\sigma(x)=\sigma(y)$): Since $\sigma$ is a bijection, we have $x=y$. Consider the block $B$ with vertices $(0,x),(x,x),(0,2x),(x,2x)$. The two alternatives of Lemma \ref{lem:blockcrit} for the block $f(B)$ yield that either
	\begin{enumerate}[(1)]
		\item $\sigma(x)=\sigma(2x)-\sigma(x)$, or
		\item $\sigma(x)=\sigma(x)-\sigma(2x).$
	\end{enumerate}
	In the first case we obtain that $\sigma(2x)=2\sigma(x)$, so that
		$$\sigma(x+y)=\sigma(2x)=2\sigma(x)=\sigma(x)+\sigma(y).$$
	If (2) holds, we have that $\sigma(2x)=0$ and hence $2x=0$, so that $x$ is rational (in which case the claim is already proven).

\subsection*{Case 2} ($\sigma(x)=\sigma(y)+\frac{1}{2}$): In this case we had 
	$$\sigma(x+y)=\sigma(x)-\sigma(y)=\frac{1}{2}.$$ 
Now by using that $\sigma$ is additive if one of the variables is rational, we see that 
	$$\sigma(x)=\sigma(y)+\frac{1}{2}=\sigma\left(y+\frac{1}{2}\right).$$
Since $\sigma$ is a bijection, we must have $x=y+\frac{1}{2}$. Hence
	\begin{align*}
	\sigma(x+y)	&=\sigma\left(2y+\frac{1}{2}\right)=\sigma(2y)+\frac{1}{2}\\
				&=2\sigma(y)+\frac{1}{2}=\sigma\left(y+\frac{1}{2}\right)+\sigma(y)\\
				&=\sigma(x)+\sigma(y),
	\end{align*}
where on the second line we also use that $\sigma(2z)=2\sigma(z)$ for all $z$, which was proven in the solution of Case 1. This completes the proof.\end{proof}

For the remainder of this section, we introduce the following notation: If $\alpha\in \bbR$, let $\ell_\alpha$ be the line in $T^2$ of slope $\alpha$ through 0. Also recall that $h_\alpha$ (resp. $v_\alpha$) denotes the horizontal line $[y=\alpha]$ (resp. the vertical line $[x=\alpha]$). We use the homomorphism property of $\sigma$ to provide a link between the $\sigma$ and the image under $f$ of any line:
	\begin{claim} \label{claim:lift} Let $x_0\in \bbR$ and let $y_0$ be the slope of $f(\ell)$. Then $y_0\equiv \sigma(x_0)\mod \bbZ$.\end{claim}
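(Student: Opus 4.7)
The plan is to pin down the slope $y_0$ by examining how $f$ acts on the countable family of intersection points $\ell_{x_0} \cap v_{1/n}$, and combining this with the group-homomorphism property of $\sigma$ established in Claim \ref{claim:hom}. First I would dispose of the case $x_0 \in \bbQ$: then $\ell_{x_0}$ is a rational line through $0$, hence fixed by $f$ after the reductions of Section \ref{sec:q2dim}, so $y_0 = x_0$; and since $\sigma$ is the identity on rationals (Theorem \ref{theorem:rationals}), the claim is immediate. I may therefore assume $x_0 \notin \bbQ$, in which case Proposition \ref{prop:qsub} applied to $f^{-1}$ forces $y_0 \notin \bbQ$ as well.

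For each integer $n \geq 1$, the point $p_n := (1/n,\, x_0/n \bmod 1)$ lies on $\ell_{x_0} \cap v_{1/n}$. Since $\sigma$ fixes $1/n$, we have $f(p_n) = (1/n,\, \sigma(x_0/n))$, which must lie in
$$\ell_{y_0} \cap v_{1/n} = \left\{\left(\tfrac{1}{n},\, \tfrac{y_0}{n} + j y_0 \bmod 1\right) : j \in \bbZ\right\}.$$
Thus there is an integer $j_n$ with $\sigma(x_0/n) \equiv y_0/n + j_n y_0 \pmod{1}$. On the other hand, since $\sigma$ is a homomorphism with $n \cdot (x_0/n) = x_0$ in $S^1$, we have $n\sigma(x_0/n) \equiv \sigma(x_0) \pmod{1}$, so $\sigma(x_0/n) \equiv \sigma(x_0)/n + m_n/n \pmod{1}$ for some integer $m_n$. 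Equating the two descriptions of $\sigma(x_0/n)$ and multiplying through by $n$ yields, after absorbing integer ambiguities, an identity in $\bbR$ of the form
$$\sigma(x_0) = (1 + n j_n)\, y_0 + M_n, \qquad M_n \in \bbZ.$$

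The final step exploits the irrationality of $y_0$: any real number admits at most one representation as $a y_0 + b$ with $a, b \in \bbZ$. Applying uniqueness to the identity above forces $1 + n j_n$ to be the same integer for every $n \geq 1$; equivalently, $n j_n$ equals the constant $j_1$. Since $j_1$ is then an integer divisible by every positive $n$, it must equal $0$, so setting $n = 1$ in the identity gives $\sigma(x_0) = y_0 + M_1$, i.e., $y_0 \equiv \sigma(x_0) \pmod{\bbZ}$. The main obstacle I anticipate is precisely this rigidity step: a single intersection, say $\ell_{x_0} \cap v_0$, determines $\sigma(x_0)$ only modulo the dense subgroup $y_0 \bbZ + \bbZ$, so a priori any integer multiple of $y_0$ could be the ``correct'' slope of $f(\ell_{x_0})$. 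Ruling out this ambiguity requires using the entire family of scales $1/n$ in concert with the additivity of $\sigma$; the irrationality of $y_0$ is then exactly what turns ``compatibility at every denominator'' into an arithmetic rigidity forcing $j_1 = 0$.
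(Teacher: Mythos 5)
Your proof is correct, and while it lives in the same circle of ideas as the paper's (images of intersection points with vertical lines at rational abscissae, additivity of $\sigma$ from Claim \ref{claim:hom}, irrationality of $y_0$), its structure is genuinely different. The paper works at just two vertical lines: at $v_0$ it uses that $f$ maps $\ell\cap v_0$ \emph{onto} $f(\ell)\cap v_0$ to obtain both $\sigma(x_0)\equiv ky_0$ and $y_0\equiv l\sigma(x_0) \bmod \bbZ$, whence $kl=1$ and $k=\pm 1$ by irrationality of $y_0$; it then kills the sign with a mod-$3$ computation at $v_{1/3}$ (which is essentially your $n=3$ congruence specialized to the hypothesis $y_0\equiv-\sigma(x_0)$). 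You instead use only the easy forward inclusion $f(\ell\cap v_{1/n})\subseteq f(\ell)\cap v_{1/n}$, but at \emph{every} scale $1/n$, and convert the resulting congruences --- via $n\sigma(x_0/n)\equiv\sigma(x_0)$ and the uniqueness of representations $ay_0+b$ with $a,b\in\bbZ$ for irrational $y_0$ --- into the statement that a single fixed integer coefficient $a$ satisfies $a\equiv 1 \bmod n$ for all $n$, forcing $a=1$. This buys two things: you never need surjectivity of $f$ onto the intersection set at $v_0$ (the step producing the paper's integer $l$), and the two stages of the paper's argument (pinning $|k|=1$, then ruling out $k=-1$) are subsumed in one uniform divisibility argument. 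One small note: to see $y_0\notin\bbQ$ you invoke Proposition \ref{prop:qsub} for $f^{-1}$, which tacitly requires that $f^{-1}$ maps lines to lines; this is harmless, since the needed fact (lines of irrational slope go to lines of irrational slope) is among the standing properties recorded at the start of Section \ref{sec:end2dim}, and follows directly from intersection counts: a line of irrational slope is dense and meets any non-parallel rational line infinitely often, whereas two distinct rational lines meet only finitely often.
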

\begin{proof} Consider the intersections of $\ell$ with the line $v_0:=[x=0]$. We have
		$$\ell\cap v_0=\{[0,k x_0]\mid k\in\bbZ\}.$$
	Let $f(\ell)$ have slope $y_0$. Then $[0,\sigma(x_0)]\in f(\ell)\cap v_0$, so we can write 
		$$\sigma(x_0)=ky_0 \mod \bbZ$$ 
	for some $k\in \bbZ$. On the other hand $[0,y_0]\in f(\ell)\cap v_0=f(\ell\cap v_0)$, so there exists $l\in \bbZ$ such that $y_0\equiv l\sigma(x_0)$ mod $\bbZ$. Hence we have
		$$y_0\equiv l\sigma(x_0)\equiv lk y_0 \mod \bbZ.$$
	Since images under $f$ of lines with irrational slope are irrational, and $x_0$ is irrational, we must have that $y_0$ is irrational. Therefore we have $lk=1$, so that $l=\pm 1$.
	
	It remains to show that $k=1$. Suppose not, so that we have $y_0\equiv -\sigma(x_0)\mod \bbZ$. Write $y_0=-\sigma(x_0)+k_0$ for some $k_0\in \bbZ$. Consider the intersections of $\ell$ with the line $v_{\frac{1}{3}}$. These occur at the points $[\frac{1}{3},\frac{1}{3}x_0+l x_0]$ for $l\in \bbZ$. Now application of $f$ maps these points to intersections of $f(\ell)$ and $v_{\frac{1}{3}}$, which are given by $[\frac{1}{3},\frac{1}{3}y_0+l y_0]$ for $l\in \bbZ$. Consider the image of the point $[\frac{1}{3},\frac{1}{3}x_0]\in \ell\cap v_{\frac{1}{3}}$. Then we can write
		$$\sigma\left(\frac{1}{3}x_0\right)\equiv \frac{1}{3}y_0+k_3 y_0\mod \bbZ$$
	for some $k_3\in \bbZ$. Now using that $y_0=-\sigma(x_0)+k_0$, we have
		$$\sigma\left(\frac{1}{3}x_0\right)\equiv -\frac{1}{3}\sigma(x_0)+\frac{k_0}{3}+k_3\sigma(x_0) \mod \bbZ.$$
	Hence
		$$\sigma(x_0)\equiv 3\sigma\left(\frac{1}{3}x_0\right)\equiv (3k_3-1)\sigma(x_0)\mod \bbZ.$$
	Since $\sigma(x_0)\notin\bbQ$, we must have $3k_3-1=1$, which is a contradiction.
	\end{proof}
	Consider now the map
		$$\widetilde{\sigma}:\bbR\to\bbR$$
	defined by $\widetilde{\sigma}(x_0):=y_0$, where the image under $f$ of a line with slope $x_0$ has slope $y_0$. The claim above establishes that $\widetilde{\sigma}$ is a lift of the map $\sigma:S^1\to S^1$. In the claims below we will establish that $\widetilde{\sigma}$ is a field automorphism of $\bbR$ and hence $\widetilde{\sigma}=\text{id}$.
		\begin{claim} For any integer $a$ and $x\in \bbR$, we have $\widetilde{\sigma}(ax)=a\widetilde{\sigma}(x)$. \label{claim:tildehomog}\end{claim}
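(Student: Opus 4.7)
The plan is to extract the integer ambiguity of the lift $\widetilde{\sigma}$ and pin it down using intersections with the vertical line $v_{1/N}$ for every integer $N$. Write $M(y) := \widetilde{\sigma}(y) - \sigma(y) \in \bbZ$, where $\sigma(y)$ is represented in $[0,1)$. Since the reductions preceding Theorem~\ref{theorem:rationals} yield $\widetilde{\sigma}(y) = y$ for rational $y$, the claim is trivial when $x \in \bbQ$, so I would assume $x$ (and hence $\sigma(x)$) is irrational. Setting $L := a\sigma(x) - \sigma(ax) \in \bbZ$, a routine rearrangement shows that $\widetilde{\sigma}(ax) = a\widetilde{\sigma}(x)$ is equivalent to $M(ax) = aM(x) + L$, so this is what I will prove.

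For any integer $N \geq 2$, the point $[1/N, x/N]$ lies on $\ell_x$. Its image $[1/N, \sigma(x/N)]$ under $f$ lies on $f(\ell_x) = \ell_{\widetilde{\sigma}(x)}$, whose intersections with $v_{1/N}$ are $[1/N, \widetilde{\sigma}(x)/N + k\widetilde{\sigma}(x)]$ for $k \in \bbZ$. Since $\sigma$ is a homomorphism of $S^1$, we have $\sigma(x/N) = (\sigma(x) + j)/N$ for a unique $j \in \{0,\dots,N-1\}$. Substituting $\widetilde{\sigma}(x) = \sigma(x) + M(x)$ and matching the two expressions for $\sigma(x/N)$ reduces to $(j - M(x))/N \equiv k\sigma(x) \pmod \bbZ$; as $\sigma(x)$ is irrational and the left side is rational, this forces $k = 0$ and $j \equiv M(x) \pmod N$, so $\sigma(x/N) = (\sigma(x) + (M(x) \bmod N))/N$.

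Now I would run the identical analysis at $[1/N, ax/N] \in \ell_{ax}$, using $\sigma(ax/N) = a\sigma(x/N)$ (which holds because $\sigma$ is a homomorphism and $a$ is an integer). After substituting the previous formula for $\sigma(x/N)$ and $\sigma(ax) = a\sigma(x) - L$, the same irrationality argument gives $k' = 0$ and produces the congruence $M(ax) \equiv aM(x) + L \pmod N$. Since $N$ was arbitrary, this congruence forces genuine equality $M(ax) = aM(x) + L$, which is exactly the desired identity. The main obstacle — and the whole point of the argument — is upgrading the immediate mod-$\bbZ$ statement $\widetilde{\sigma}(ax) \equiv a\widetilde{\sigma}(x) \pmod \bbZ$ to an equality in $\bbR$; this is resolved by the fact that $T^2$ carries $N$-th roots at every scale, so varying $N$ rigidifies the lift completely.
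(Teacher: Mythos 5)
Your proof is correct and takes essentially the same route as the paper: both arguments pin down the integer ambiguity in the lift by tracking how $f$ permutes the intersections of $\ell_x$ and $\ell_{ax}$ with the vertical lines $v_{1/N}$, using irrationality of $\widetilde{\sigma}(x)$ to force the unknown intersection index to vanish, and then varying the modulus over infinitely many values to upgrade a congruence to a genuine equality. The paper runs this mechanism with $N=p$ prime and implicit integer bookkeeping (the integers $k$, $l$, $n$) where you use the explicit defect function $M$ and all $N\geq 2$, but this is a cosmetic difference in bookkeeping rather than a different idea.
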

		\begin{proof} Let $a\in \bbZ$ and $x_0\in \bbR$. If $x_0$ is rational then the claim is already proven, so we will assume that $x_0$ is irrational. Since we have
			$$\widetilde{\sigma}(ax_0)\equiv \sigma(ax_0)\equiv a\sigma(x_0)\equiv a\widetilde{\sigma}(x_0) \mod \bbZ,$$
		we can choose $k\in\bbZ$ such that $\widetilde{\sigma}(ax_0)=a\widetilde{\sigma}(x_0)+k$. Choose $p>1$ prime and let $\ell_{ax_0}$ be the line in $T^2$ through 0 with slope $ax_0$. The intersections of $\ell_{ax_0}$ and $v_{\frac{1}{p}}$ occur at heights $\frac{a}{p}x_0+a\bbZ x_0$ mod $\bbZ$. Under $f$ these are bijectively mapped to the intersections of $v_{\frac{1}{p}}$ and $\ell_{\widetilde{\sigma}(ax_0)}$, which occur at heights $\frac{1}{p}\widetilde{\sigma}(ax_0)+\bbZ\widetilde{\sigma}(ax_0)$. In particular there exists $l\in \bbZ$ such that
		\begin{equation} \sigma\left(\frac{a}{p}x_0+lax_0\right)\equiv \frac{1}{p}\widetilde{\sigma}(ax_0)\mod \bbZ. \label{eq:relmod} \end{equation}
For the left-hand side we have
		\begin{equation}\sigma\left(\frac{a}{p}x_0+lax_0\right)\equiv a \sigma\left(\frac{1}{p}x_0+lx_0\right)\mod \bbZ.\label{eq:homog}\end{equation}
Note that $[\frac{1}{p},\frac{1}{p}x_0+lx_0]$ is an intersection point of $\ell_{x_0}$ and $v_{\frac{1}{p}}$ so that $[\frac{1}{p},\sigma(\frac{1}{p}x_0+lx_0)]$ is an intersection point of $\ell_{\widetilde{\sigma}(x_0)}$ and $v_{\frac{1}{p}}$. Hence there exists $n\in\bbZ$ such that
	\begin{equation}\sigma\left(\frac{1}{p}x_0+lx_0\right)\equiv \frac{1}{p}\widetilde{\sigma}(x_0)+n\widetilde{\sigma}(x_0)\mod \bbZ.\label{eq:isect}\end{equation}
Combining Equations \ref{eq:relmod}, \ref{eq:homog} and \ref{eq:isect}, we see that
		$$\frac{a}{p}\widetilde{\sigma}(x_0)+an\widetilde{\sigma}(x_0)\equiv \frac{1}{p}\widetilde{\sigma}(ax_0) \mod \bbZ.$$
Further using that $\widetilde{\sigma}(ax_0)=a\widetilde{\sigma}(x_0)+k$, we have
		$$\frac{a}{p}\widetilde{\sigma}(x_0)+an\widetilde{\sigma}(x_0)\equiv \frac{a}{p}\widetilde{\sigma}(x_0)+\frac{k}{p} \mod\bbZ,$$
	so that 
		$$an\widetilde{\sigma}(x_0)\equiv \frac{k}{p}\mod \bbZ.$$
Because $\widetilde{\sigma}(x_0)$ is irrational, this is impossible unless $an=0$ and $\frac{k}{p}\in \bbZ$. Since $p$ was an arbitrary prime number, we must have $k=0$.
		\end{proof}
		\begin{claim} $\widetilde{\sigma}$ is a homomorphism of $\bbR$ (as an additive group).
		\label{claim:tildehom}
		\end{claim}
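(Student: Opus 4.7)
The plan is to exploit two facts already in hand: first, $\widetilde{\sigma}$ lifts the honest group homomorphism $\sigma:S^1\to S^1$, so the ``defect''
\[ c(x,y) := \widetilde{\sigma}(x+y) - \widetilde{\sigma}(x) - \widetilde{\sigma}(y) \]
is automatically an integer for every $x,y\in\bbR$; and second, Claim \ref{claim:tildehomog} provides exact (non-modular) integer homogeneity of $\widetilde{\sigma}$. My strategy is to show that the integer $c(x,y)$ is divisible by every prime, hence zero.

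First I would upgrade Claim \ref{claim:tildehomog} from integer scaling to rational scaling: applying $\widetilde{\sigma}(n\cdot z) = n\widetilde{\sigma}(z)$ with $z = x/n$ yields
\[ \widetilde{\sigma}(x/n) = \widetilde{\sigma}(x)/n \qquad \text{for every } n\in\bbN,\ x\in\bbR, \]
and combining the two gives $\widetilde{\sigma}(qx) = q\widetilde{\sigma}(x)$ for all $q\in\bbQ$. This is an immediate substitution and should be disposed of in one line.

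Next I would observe that because $\sigma$ is a homomorphism of $S^1$ (Claim \ref{claim:hom}) and $\widetilde{\sigma}$ reduces mod $\bbZ$ to $\sigma$ (Claim \ref{claim:lift}), we have $c(x,y)\in\bbZ$ for all $x,y\in\bbR$. Now apply the definition of $c$ to the pair $(x/p,\, y/p)$ for an arbitrary prime $p$: using the rational homogeneity from the previous step,
\[ c(x/p,\, y/p) = \widetilde{\sigma}\bigl((x+y)/p\bigr) - \widetilde{\sigma}(x/p) - \widetilde{\sigma}(y/p) = \tfrac{1}{p}\bigl(\widetilde{\sigma}(x+y) - \widetilde{\sigma}(x) - \widetilde{\sigma}(y)\bigr) = \tfrac{c(x,y)}{p}. \]
The left-hand side is itself an integer by the previous paragraph, so $p$ divides $c(x,y)$. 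Since $p$ was arbitrary and $c(x,y)$ is a fixed integer, we conclude $c(x,y)=0$, which is the desired additivity.

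There is no real obstacle to carrying this out: the heavy lifting was already done in Claims \ref{claim:hom}, \ref{claim:lift}, and \ref{claim:tildehomog}, and what remains is a short arithmetic observation. The one thing worth being careful about is to keep the distinction between $\sigma$ (defined on $S^1$) and $\widetilde{\sigma}$ (defined on $\bbR$) crisp when writing the equalities, so that the passage from ``mod $\bbZ$'' statements about $\sigma$ to honest equalities involving $\widetilde{\sigma}$ is justified at each step only by invoking Claim \ref{claim:tildehomog} (and its rational refinement).
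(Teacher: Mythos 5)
Your proposal is correct and is essentially the paper's own argument: both exploit that the additivity defect $\widetilde{\sigma}(x+y)-\widetilde{\sigma}(x)-\widetilde{\sigma}(y)$ is an integer (because $\widetilde{\sigma}$ lifts the homomorphism $\sigma$) and then use the integer homogeneity of Claim \ref{claim:tildehomog} applied at $x/n$, $y/n$ to conclude the defect is divisible by every $n$ (you phrase this with primes $p$; the paper multiplies back up by $n$ and works mod $n\bbZ$), hence zero. The packaging via $c(x,y)$ is a cosmetic difference only.
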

		\begin{proof} The previous claim with $a=-1$ shows that $\widetilde{\sigma}(-x)=-\widetilde{\sigma}(x)$.  Therefore it remains to show that $\widetilde{\sigma}$ is additive. Let $x,y\in \bbR$ be arbitrary. Since $\widetilde{\sigma}$ is a lift of $\sigma$, we have for any $n\geq 1$:
		\begin{align*}
			\widetilde{\sigma}\left(\frac{x+y}{n}\right)&\equiv \sigma\left(\frac{x}{n}+\frac{y}{n}\right)\\
											&\equiv \sigma\left(\frac{x}{n}\right)+\sigma\left(\frac{y}{n}\right)\\
											&\equiv \widetilde{\sigma}\left(\frac{x}{n}\right)+\widetilde{\sigma}\left(\frac{y}{n}\right)\mod \bbZ.
		\end{align*}
Multiplying by $n$ and using Claim \ref{claim:tildehomog} with $a=n$, we have
			$$\widetilde{\sigma}(x+y)\equiv \widetilde{\sigma}(x)+\widetilde{\sigma}(y)\mod n\bbZ.$$
	Since $n$ was arbitrary, we must have $\widetilde{\sigma}(x+y)=\widetilde{\sigma}(x)+\widetilde{\sigma}(y)$ as desired.	\end{proof}
	
		\begin{claim} For any $x\neq 0$, we have $\widetilde{\sigma}(\frac{1}{x})=\frac{1}{\widetilde{\sigma}(x)}$.\end{claim}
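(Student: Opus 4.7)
The plan is to exploit the coordinate symmetry that is built into the formula $f(x,y) = (\sigma(x), \sigma(y))$, in which the \emph{same} function $\sigma$ acts on both coordinates. Let $T : T^2 \to T^2$ be the involution $T(a,b) := (b,a)$; a direct computation gives
\[
(T \circ f)(x,y) = (\sigma(y), \sigma(x)) = (f \circ T)(x,y),
\]
so $f$ commutes with $T$.

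The second ingredient is that $T$ carries the slope-$\alpha$ line $\ell_\alpha$ through the origin to $\ell_{1/\alpha}$ whenever $\alpha \neq 0$: indeed, $T$ converts the parametrization $t \mapsto (t, \alpha t)$ of $\ell_\alpha$ into $t \mapsto (\alpha t, t)$, which after substituting $s = \alpha t$ is the standard parametrization of $\ell_{1/\alpha}$. Fix $x \neq 0$ and compute $f(T(\ell_x))$ in two ways. First, $f(T(\ell_x)) = f(\ell_{1/x}) = \ell_{\widetilde{\sigma}(1/x)}$ by definition of $\widetilde{\sigma}$. Second, using $T\circ f = f\circ T$, we get $f(T(\ell_x)) = T(f(\ell_x)) = T(\ell_{\widetilde{\sigma}(x)}) = \ell_{1/\widetilde{\sigma}(x)}$. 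Equating and using that the slope in $\bbR$ of a line through the origin uniquely determines the line, we conclude $\widetilde{\sigma}(1/x) = 1/\widetilde{\sigma}(x)$.

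The only items needing verification are (i) $\widetilde{\sigma}(x) \neq 0$ for $x \neq 0$, and (ii) uniqueness of slope. For (i): Claim \ref{claim:tildehomog} with $a = n$ gives $\widetilde{\sigma}(n) = n\,\widetilde{\sigma}(1) = n$ for all $n \in \bbZ$ (using $\widetilde{\sigma}(1) = 1$, which follows from $f(\ell_1) = \ell_1$), and since $\widetilde{\sigma}$ lifts the bijection $\sigma: S^1 \to S^1$, the condition $\widetilde{\sigma}(x) \in \bbZ$ forces $x \in \bbZ$; hence $\widetilde{\sigma}(x) = 0$ only when $x = 0$. For (ii): if $\ell_\alpha = \ell_\beta$ with $\alpha \neq \beta$, then for every $t \in \bbR$ the system $(t, \alpha t) \equiv (s, \beta s) \pmod{\bbZ^2}$ must be solvable, but eliminating $s$ forces $t(\beta - \alpha) \in \bbZ + \beta\bbZ$, a discrete subset of $\bbR$, a contradiction. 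I do not anticipate a genuine obstacle: the entire argument is a direct consequence of the built-in coordinate symmetry.
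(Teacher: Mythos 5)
Your proof is correct, and it takes a genuinely different and substantially shorter route than the paper's. The paper works entirely with intersection counts: it compares the intersections of $\ell_{x_0}$ and $f(\ell_{x_0})$ with the horizontal lines $h_0$ and $h_{1/3}$ to obtain $\widetilde{\sigma}(1/x)\equiv \pm 1/\widetilde{\sigma}(x) \bmod \bbZ$, rules out the minus sign, and then removes the residual integer ambiguity by a separate rescaling trick (the $N=|n|+1$ step). You instead observe that the normal form $f=(\sigma,\sigma)$, already in force from the start of the section, makes $f$ commute with the coordinate swap $T(a,b)=(b,a)$, and that $T$ realizes $\alpha\mapsto 1/\alpha$ on slopes of lines through the origin; this gives the exact identity in one step, with no mod-$\bbZ$ intermediate stage. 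The supporting points you flag are handled correctly, up to one cosmetic slip: in your uniqueness-of-slope argument the set $\bbZ+\beta\bbZ$ is not discrete when $\beta$ is irrational, but it is countable, which is all you need since $t(\beta-\alpha)$ ranges over all of $\bbR$ as $t$ does. (Nonvanishing of $\widetilde{\sigma}(x)$ for $x\neq 0$ also follows even more directly from injectivity of $f$ on lines together with $f(h_0)=h_0$.) The trade-off is that your argument leans on the product structure $f=(\sigma,\sigma)$ rather than on raw intersection counting, but that structure is already established and exploited in exactly this way elsewhere in the section (e.g.\ in the proof of Claim \ref{claim:hom}), so your argument is legitimate and arguably the more natural one.
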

			\begin{proof} We will first show that $\widetilde{\sigma}(\frac{1}{x})\equiv \frac{1}{\widetilde{\sigma}(x)}\mod \bbZ$ for $x\neq 0$. Let $x_0\in \bbR^\times$ and let $\ell_{x_0}$ again be the line with slope $x_0$ through 0. We can assume that $x_0$ is irrational. Note that the intersections of $\ell_{x_0}$ with the horizontal line $h_0$ occur at the points $[\frac{k}{x_0}, 0]$. Under $f$ these are bijectively mapped to the intersections of $\ell_{\widetilde{\sigma}(x_0)}$ with $h_0$. In particular there are integers $k,l\in \bbZ$ such that
			$$\sigma\left(\frac{1}{x_0}\right)\equiv \frac{k}{\widetilde{\sigma}(x_0)}\mod\bbZ$$
		and 
			$$\sigma\left(\frac{l}{x_0}\right)\equiv \frac{1}{\widetilde{\sigma}(x_0)}\mod \bbZ.$$
	Hence 
			$$\frac{1}{\widetilde{\sigma}(x_0)}\equiv \sigma\left(\frac{l}{x_0}\right)\equiv l\sigma\left(\frac{1}{x_0}\right)\equiv kl\frac{1}{\widetilde{\sigma}(x_0)}\mod \bbZ.$$
		Since $\widetilde{\sigma}(x_0)$ is irrational, we must have $kl=1$ so that $k=\pm 1$. We claim that $k=1$. To see this, consider the intersections of $\ell_{x_0}$ with the horizontal line $h_{\frac{1}{3}}$. These occur at $[\frac{1}{3x_0}+\frac{n}{x_0},\frac{1}{3}]$ for $n\in\bbZ$. Under $f$ these are mapped to the intersections of $\ell_{\widetilde{\sigma}(x_0)}$ with $h_{\frac{1}{3}}$. Hence there exists $n\in\bbZ$ such that
		\begin{equation} \sigma\left(\frac{1}{3x_0}\right)\equiv \frac{1}{3\widetilde{\sigma}(x_0)}+\frac{n}{\widetilde{\sigma}(x_0)} \mod \bbZ. \label{eq:rel1}\end{equation}
For the left-hand side, we have
		\begin{equation} \sigma\left(\frac{1}{3x_0}\right)\equiv \widetilde{\sigma}\left(\frac{1}{3x_0}\right)\equiv \frac{1}{3}\widetilde{\sigma}\left(\frac{1}{x_0}\right)\equiv \frac{k}{3}\frac{1}{\widetilde{\sigma}(x_0)} \mod \bbZ,\label{eq:rel2}\end{equation}
	where we used that $\widetilde{\sigma}$ is $\bbQ$-linear (because it is a homomorphism $\bbR\to\bbR$). Combining Equations \ref{eq:rel1} and \ref{eq:rel2}, we find
		$$(3n+1-k)\frac{1}{\widetilde{\sigma}(x_0)}\equiv 0\mod 3\bbZ.$$
	Since $\widetilde{\sigma}(x_0)$ is not rational, we must have $3n+1-k=0$ so that $k\equiv 1\mod 3$. Since we already found that $k=1$ or $-1$, we must have $k=1$, as desired.
	
	At this point we know that $\widetilde{\sigma}(\frac{1}{x})\equiv \frac{1}{\widetilde{\sigma}(x)}\mod \bbZ$ for any $x\neq 0$. It remains to show that $\widetilde{\sigma}(\frac{1}{x})=\frac{1}{\widetilde{\sigma}(x)}$. To see this, let $x\in \bbR^\times$ be arbitrary. We can assume that $x$ is irrational. Let $n\in \bbZ$ such that 
		$$\widetilde{\sigma}\left(\frac{1}{x}\right)=\frac{1}{\widetilde{\sigma}(x)}+n.$$
	Set $N=|n|+1$, and note that
		$$\widetilde{\sigma}\left(\frac{1}{Nx}\right)=\frac{1}{N}\widetilde{\sigma}\left(\frac{1}{x}\right)=\frac{1}{\widetilde{\sigma}(NX)}+\frac{n}{N}.$$
	Hence if $n\neq 0$, we have $\widetilde{\sigma}\left(\frac{1}{Nx}\right)\not\equiv \frac{1}{\widetilde{\sigma}(Nx)}\mod \bbZ$, which is a contradiction.\end{proof}
In the final two claim we will show that $\widetilde{\sigma}$ is multiplicative.
	\begin{claim} For any $x,y\in \bbR$, we have $\widetilde{\sigma}(xy)=\widetilde{\sigma}(x)\widetilde{\sigma}(y)$. \label{claim:mult}	\end{claim}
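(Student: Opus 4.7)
The strategy is to reduce multiplicativity to the special case $\widetilde{\sigma}(x^2)=\widetilde{\sigma}(x)^2$ via polarization, and then to prove this special case using the algebraic identity $\frac{1}{x^2+x}=\frac{1}{x}-\frac{1}{x+1}$ together with the two ingredients already at our disposal: additivity of $\widetilde{\sigma}$ (Claim \ref{claim:tildehom}) and the reciprocal identity $\widetilde{\sigma}(1/x)=1/\widetilde{\sigma}(x)$ from the preceding claim. This is the classical trick for upgrading an additive multiplicative-inverse preserving map of $\bbR$ to a field automorphism.

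First I would establish $\widetilde{\sigma}(x^2)=\widetilde{\sigma}(x)^2$ for all $x\in\bbR$. Assume first $x\neq 0,-1$, so that $x^2+x\neq 0$. Using additivity and the reciprocal identity,
\[
\widetilde{\sigma}\!\left(\frac{1}{x^2+x}\right)
=\widetilde{\sigma}\!\left(\frac{1}{x}\right)-\widetilde{\sigma}\!\left(\frac{1}{x+1}\right)
=\frac{1}{\widetilde{\sigma}(x)}-\frac{1}{\widetilde{\sigma}(x)+1}
=\frac{1}{\widetilde{\sigma}(x)^2+\widetilde{\sigma}(x)},
\]
where we also used $\widetilde{\sigma}(x+1)=\widetilde{\sigma}(x)+1$ (additivity plus $\widetilde{\sigma}(1)=1$). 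On the other hand, the reciprocal identity applied directly to $x^2+x$ gives $\widetilde{\sigma}(1/(x^2+x))=1/\widetilde{\sigma}(x^2+x)$. Comparing and inverting yields $\widetilde{\sigma}(x^2+x)=\widetilde{\sigma}(x)^2+\widetilde{\sigma}(x)$, and additivity then gives $\widetilde{\sigma}(x^2)=\widetilde{\sigma}(x)^2$. The two excluded values are immediate: $\widetilde{\sigma}(0)=0$ and $\widetilde{\sigma}(1)=1=(-1)^2=\widetilde{\sigma}(-1)^2$ (using Claim \ref{claim:tildehomog} with $a=-1$).

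Next I would polarize. For arbitrary $x,y\in\bbR$, additivity gives
\[
\widetilde{\sigma}((x+y)^2)=\widetilde{\sigma}(x^2)+2\widetilde{\sigma}(xy)+\widetilde{\sigma}(y^2),
\]
while the special case applied to $x+y$ gives
\[
\widetilde{\sigma}((x+y)^2)=\widetilde{\sigma}(x+y)^2=(\widetilde{\sigma}(x)+\widetilde{\sigma}(y))^2=\widetilde{\sigma}(x)^2+2\widetilde{\sigma}(x)\widetilde{\sigma}(y)+\widetilde{\sigma}(y)^2.
\]
Subtracting $\widetilde{\sigma}(x^2)=\widetilde{\sigma}(x)^2$ and $\widetilde{\sigma}(y^2)=\widetilde{\sigma}(y)^2$ from both sides and dividing by $2$ yields $\widetilde{\sigma}(xy)=\widetilde{\sigma}(x)\widetilde{\sigma}(y)$, as desired.

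I do not expect any real obstacles here — the work was all done in setting up additivity and the reciprocal property. The only minor care is to handle the two values $x=0,-1$ excluded by the identity $1/(x^2+x)=1/x-1/(x+1)$, but these fall out of $\widetilde{\sigma}(0)=0$ and $\widetilde{\sigma}(-1)=-1$. Once multiplicativity is established, combined with additivity and $\widetilde{\sigma}(1)=1$, the map $\widetilde{\sigma}$ is a field automorphism of $\bbR$, and hence the identity, which (as indicated in the outline of Section \ref{sec:end2dim}) will complete the two-dimensional case.
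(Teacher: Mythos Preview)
Your argument is correct; it is the classical ``Hua-type'' observation that an additive self-map of a field preserving reciprocals is automatically multiplicative, carried out via the partial-fraction identity $\frac{1}{x(x+1)}=\frac{1}{x}-\frac{1}{x+1}$ and polarization. The only implicit point is that $\widetilde{\sigma}(x)\neq 0,-1$ when $x\neq 0,-1$, but this is already contained in the reciprocal claim (which forces $\widetilde{\sigma}(x)\neq 0$ for $x\neq 0$) together with $\widetilde{\sigma}(-1)=-1$.

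This is, however, a genuinely different route from the paper's. The paper does not invoke the reciprocal identity at all in this step; instead it goes back to the geometry of $T^2$. Intersecting $\ell_{x_0}$ with the vertical line $v_{y_0}$ shows that $\widetilde{\sigma}(x_0 y_0)-\widetilde{\sigma}(x_0)\widetilde{\sigma}(y_0)\in\bbZ+\bbZ\,\widetilde{\sigma}(x_0)$ for every $y_0$; then one notes that $y_0\mapsto \widetilde{\sigma}(x_0 y_0)-\widetilde{\sigma}(x_0)\widetilde{\sigma}(y_0)$ is an additive homomorphism $\bbR\to\bbZ+\bbZ\,\widetilde{\sigma}(x_0)\cong\bbZ^2$, hence identically zero since $\bbR$ is divisible. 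Your approach is purely algebraic and, once additivity and the reciprocal property are in hand, essentially immediate---in fact it shows that the paper's separate geometric argument for multiplicativity is logically redundant. The paper's approach, on the other hand, keeps the proof stylistically uniform (everything is read off from intersection patterns on the torus) and does not depend on the reciprocal claim at all.
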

	\begin{proof} Let $x_0,y_0\in \bbR$. Note that since $\widetilde{\sigma}$ is an isomorphism of $\bbR$ as an additive group, $\widetilde{\sigma}$ is $\bbQ$-linear. Hence without loss of generality, we assume that $x_0$ is irrational. Consider the intersections of $\ell_{x_0}$ with the vertical line $v_{y_0}$. These occur at the points $[y_0,x_0 y_0+k x_0]$ for $k\in \bbZ$. Under $f$ these intersection points are mapped to the points of intersection of $\ell_{\widetilde{\sigma}(x_0)}$ with $v_{\widetilde{\sigma}(y_0)}$. In particular, by considering the image of $[y_0,x_0 y_0]$, we see that there is $k\in \bbZ$ such that
		$$\sigma(x_0 y_0)\equiv \widetilde{\sigma}(x_0)\widetilde{\sigma}(y_0)+k\widetilde{\sigma}(x_0)\mod \bbZ.$$
	Since $\sigma(x_0 y_0)\equiv \widetilde{\sigma}(x_0 y_0)\mod \bbZ$, we see that
		$$\widetilde{\sigma}(x_0 y_0)-\widetilde{\sigma}(x_0)\widetilde{\sigma}(y_0)\in \bbZ+\bbZ\widetilde{\sigma}(x_0).$$
	For $x\in \bbR$, let $\mu_x:\bbR\to \bbR$ denote multiplication by $x$. Then the above argument shows that the homomorphism
		$$(\widetilde{\sigma}\circ  \mu_{x_0})-\left(\mu_{\widetilde{\sigma}(x_0)}\circ \widetilde{\sigma}\right):\bbR \to \bbR$$
	has image contained in $\bbZ+\bbZ\widetilde{\sigma}(x_0)$. Since $\widetilde{\sigma}(x_0)$ is irrational, we have $\bbZ+\bbZ\widetilde{\sigma}(x_0)\cong \bbZ^2$ as additive groups. But any homomorphism $\bbR\to \bbZ^2$ is trivial (because $\bbR$ is divisible), so that $\widetilde{\sigma}\circ \mu_{x_0}=\mu_{\widetilde{\sigma}(x_0)}\circ \widetilde{\sigma}$, as desired.\end{proof}
As commented at the beginning of this section, the above results finish the 2-dimensional case of Theorem \ref{thm:main}. Indeed, up to precomposition by an affine automorphism, any bijection $f:T^2\to T^2$ that maps lines to lines, is of the form $(\sigma,\sigma)$, where $\sigma$ is a homomorphism $S^1\to S^1$ that lifts to a field automorphism $\widetilde{\sigma}$ of $\bbR$. Since any field automorphism of $\bbR$ is trivial, it follows that $\sigma=\text{id}$ and hence the original map is affine.

\section{The $n$-dimensional case}
\label{sec:ndim}
We finish the proof of Theorem \ref{thm:main} that any bijection of $T=\bbR^n\slash \bbZ^n, \, n\geq 2,$ that maps lines to lines, is an affine map. We argue by induction on $n$. The base case $n=2$ has been proven in the previous section. Let $f:T\to T$ be a bijection that maps lines to lines. We recall that in Section \ref{sec:planes}, we showed that for any rational subtorus $S\subseteq T$, the image $f(S)$ is also a rational subtorus.

\begin{proof}[Proof of Theorem \ref{thm:main}] Without loss of generality, we assume $f(0)=0$. For $1\leq i\leq n$, we let 
	$$\ell_i:=\begin{bmatrix} \bbR e_i \end{bmatrix}$$
denote the (image of the) coordinate line. Each $\ell_i$ is a rational line, so $f(\ell_i)$ is also a rational line. Choose primitive integral vectors $v_i\in\bbZ^n$ such that
	$$f(\ell_i)=\begin{bmatrix} \bbR v_i\end{bmatrix}.$$
Note that $v_i$ is unique up to sign. Let $A$ be the linear map of $\bbR^n$ with $A e_i = v_i$ for every $i$. Our goal is to show that $f=A$ as maps of the torus. 
\begin{claim} $A$ is invertible with integer inverse. \label{claim:ainv} \end{claim}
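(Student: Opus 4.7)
The plan is to deduce both invertibility of $A$ and integrality of $A^{-1}$ from Lemma~\ref{lem:span}, using that $f$ preserves cardinalities of intersections.

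First I would establish invertibility by applying Lemma~\ref{lem:span} with $S=T$ and basis $e_1,\dots,e_n$, which yields
$$f(T)=[\Span(v_1,\dots,v_n)].$$
Since $f$ is a bijection, $f(T)=T$; if $v_1,\dots,v_n$ were linearly dependent then $[\Span(v_1,\dots,v_n)]$ would be a proper closed subtorus of $T$, a contradiction. Hence $v_1,\dots,v_n$ is a basis of $\bbR^n$ and $A$ is invertible.

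For the integrality of $A^{-1}$, I would fix $i$ and use the codimension-one coordinate subtorus $\Pi_i:=[\Span(e_j:j\neq i)]$. A direct computation shows $\ell_i\cap\Pi_i=\{0\}$, and Lemma~\ref{lem:span} gives $f(\Pi_i)=[V_i]$ with $V_i:=\Span(v_j:j\neq i)$ (a hyperplane, since $A$ is invertible). Since $f$ preserves intersection cardinalities,
$$|[\bbR v_i]\cap[V_i]|=1.$$
The crux is to convert this into integrality of the $i$th row of $A^{-1}$. Using the direct sum $\bbR^n=\bbR v_i\oplus V_i$, let $\pi_i$ denote the projection onto $\bbR v_i$ along $V_i$. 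Computing $\pi_i$ in the basis $v_1,\dots,v_n$ shows $\pi_i(e_k)=(A^{-1})_{ik}v_i$, so the subgroup $G_i\subseteq\bbR$ defined by $\pi_i(\bbZ^n)=G_iv_i$ is precisely the subgroup generated by the entries of the $i$th row of $A^{-1}$. Primitivity of $v_i$ gives $[t_1 v_i]=[t_2 v_i]$ iff $t_1\equiv t_2\pmod{\bbZ}$, and $[tv_i]\in[V_i]$ iff $t\in G_i$; hence
$$|[\bbR v_i]\cap[V_i]|=|(G_i+\bbZ)/\bbZ|.$$
Since $A^{-1}\in M_n(\bbQ)$, $G_i+\bbZ$ is a cyclic group of the form $\tfrac{1}{N_i}\bbZ$, so the count equals $N_i$. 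Forcing $N_i=1$ for each $i$ shows every row of $A^{-1}$ is integral, i.e.\ $A^{-1}\in M_n(\bbZ)$.

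The principal obstacle is the final intersection-count identification, which requires careful bookkeeping of the group structure on the circle $[\bbR v_i]$, the effect of the projection $\pi_i$, and the primitivity and integrality of the $v_i$. Once this identification is in hand, the rest of the argument is a direct application of the preservation of intersection counts and Lemma~\ref{lem:span}.
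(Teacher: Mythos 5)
Your proposal is correct, and while the invertibility half coincides with the paper's (both apply Lemma~\ref{lem:span} to the full torus to see that $v_1,\dots,v_n$ span $\bbR^n$), the integrality half takes a genuinely different route. The paper inducts along the flag $V_j=\Span(v_1,\dots,v_j)$: at each step it shows the composite $f(\ell_j)\hookrightarrow [V_j]\to [V_j]/[V_{j-1}]$ is injective because $f(\ell_j)\cap[V_{j-1}]=f(\ell_j\cap U_{j-1})=\{0\}$, deduces the splitting $\pi_1[V_j]\cong\pi_1[V_{j-1}]\oplus\pi_1(f(\ell_j))$, and concludes that $v_1,\dots,v_n$ generate $\pi_1T=\bbZ^n$. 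You instead work one row of $A^{-1}$ at a time, intersecting $f(\ell_i)=[\bbR v_i]$ with the image of the \emph{complementary} coordinate hyperplane and converting the count $|[\bbR v_i]\cap[V_i]|=1$ into the statement that the subgroup $G_i$ generated by the $i$th row of $A^{-1}$ satisfies $G_i+\bbZ=\bbZ$. Your bookkeeping checks out: $\pi_i(e_k)=(A^{-1})_{ik}v_i$ is correct since the columns of $A$ are the $v_j$; primitivity of $v_i$ does give $[t_1v_i]=[t_2v_i]$ iff $t_1-t_2\in\bbZ$; and $[tv_i]\in[V_i]$ iff $t\in G_i$ follows by applying $\pi_i$ to the relation $tv_i\in V_i+\bbZ^n$ (one should note $[V_i]$ is closed because $V_i$ is rational, which holds since the $v_j$ are integral). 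Both arguments ultimately rest on the same geometric input --- a rational line through $0$ meets a complementary rational subtorus through $0$ exactly once, and $f$ preserves this --- but yours trades the paper's induction and $\pi_1$ splitting for an explicit computation with $A^{-1}$, which is arguably more direct, while the paper's version yields the cleaner structural statement that $v_1,\dots,v_n$ form a $\bbZ$-basis of $\bbZ^n$.
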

\begin{proof} To show that $A$ is invertible, we need to show that $v_1,\dots, v_n$ span $\bbR^n$. To show that $A^{-1}$ has integer entries, we need to show that in addition, $v_1,\dots,v_n$ generate $\bbZ^n$ (as a group). For $1\leq j\leq n$, set
	$$U_j:=\Span\{e_i \mid i\leq j\},$$
and
	$$V_j:=\Span\{v_i \mid i \leq j\}.$$
Note that $f [U_j]=[V_j]$ by Lemma \ref{lem:span}. Taking $j=n$, this already shows that $v_1,\dots,v_n$ span $\bbR^n$, so $A$ is invertible. We argue by induction on $j$ that $\{v_1,\dots,v_j\}$ generate $\pi_1[V_j]\subseteq \pi_1 T$. For $j=n$, this exactly means that $v_1,\dots,v_n$ generate $\pi_1 T$, which would finish the proof.

The base case $j=1$ is exactly the assertion that $v_1$ is a primitive vector. Now suppose the statement is true for some $j$. Consider the composition
	\begin{equation} f(\ell_j) \hookrightarrow V_j \to V_j\slash V_{j-1}. \label{eq:quot} \end{equation}
This composition is a homomorphism of  $f(\ell_j)\cong S^1$ to $V_j\slash V_{j-1}\cong S^1$. The kernel is given by
	$$f(\ell_j)\cap V_{j-1}= f(\ell_j \cap U_{j-1})=f(0)=0.$$
Therefore the map $f(\ell_j)\rightarrow V_j\slash V_{j-1}$ is an isomorphism, and 
	$$\pi_1(V_j)\cong \pi_1(V_{j-1})\oplus \pi_1(f(\ell_j)).$$
Since we know by the inductive hypothesis that $\pi_1(V_{j-1})$ is generated by $v_1,\dots,v_{j-1}$, and that $\pi_1(f(\ell_j))$ is generated by $v_j$ (again because $v_j$ is primitive), we find that $\pi_1(V_j)$ is generated by $v_1,\dots,v_j$, as desired.\end{proof}

For the remainder of the proof we set $g:=A^{-1}\circ f$. Our goal is to show that $g=\text{id}$. Note that $g$ is a bijection of $T$ with $g(0)=0$ and $g(\ell_i)=\ell_i$ for every $i$. Let 
	$$H_{i}:=\begin{bmatrix} \Span(e_j \mid j\neq i)\end{bmatrix}$$ 
denote the (image of the) coordinate hyperplane. Since $g(\ell_i)=\ell_i$ for every $i$, we know (using Lemma \ref{lem:span} again) that $g(H_i)=H_i$. By the inductive hypothesis, $g$ is given by a linear map $A_i$ on $H_i$. But any linear automorphism of $H_i$ that leaves the coordinate lines $\ell_j, \, j\neq i,$ invariant, must be of the form
	$$\begin{pmatrix} \pm 1 & & \\ & \ddots & \\ & & \pm 1 \end{pmatrix}.$$
Now recall that $v_i$ was unique up to sign. Replace $v_i$ by $-v_i$ whenever $g|_{\ell_i} = -\text{id}$. After this modification, we have that $g=\text{id}$ on every coordinate hyperplane $H_i$. Our goal is to show that $g=\text{id}$ on $T$.

\begin{claim} Fix $1\leq i\leq n$ and let $\ell$ be any line parallel to $\ell_i$. Then $g(\ell)=\ell$.\end{claim}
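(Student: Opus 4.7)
The plan is to use the two properties of $g$ established so far: $g$ fixes every coordinate hyperplane $H_j$ pointwise, and $g$ sends lines to lines. Write $\ell = p + \bbR e_i$; after translating $p$ along $\bbR e_i$ I may take $p$ to be the unique intersection $\ell \cap H_i$, so that $p_i = 0$.

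The first step is to observe that for any $j \neq i$ the $j$-th coordinate along $\ell$ is constant, equal to $p_j$. Hence $\ell$ is either entirely contained in $H_j$ (if $p_j = 0$) or entirely disjoint from $H_j$ (if $p_j \neq 0$). If $\ell \subseteq H_j$ for some $j \neq i$, then $g$ fixes $\ell$ pointwise and the claim is immediate. Otherwise $\ell \cap H_j = \emptyset$ for every $j \neq i$; since $g$ fixes each $H_j$ setwise and is a bijection, this yields $g(\ell) \cap H_j = g(\ell \cap H_j) = \emptyset$ for every $j \neq i$.

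The second step is to convert these avoidance conditions into information about the direction of $g(\ell)$. Writing $g(\ell) = [q + \bbR v]$ with $v \in \bbR^n \setminus \{0\}$, the $j$-th coordinate along $g(\ell)$ on the torus is $q_j + t v_j \bmod \bbZ$; if $v_j \neq 0$ then this covers all of $\bbR/\bbZ$ as $t$ varies, so $g(\ell)$ must meet $H_j$. Thus the avoidance conditions force $v_j = 0$ for every $j \neq i$, whence $v \in \bbR e_i$ and $g(\ell)$ is parallel to $\ell_i$. Since $g(\ell)$ contains $g(p) = p$, we conclude $g(\ell) = p + \bbR e_i = \ell$.

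The only delicate step is the conversion from ``$g(\ell)$ avoids $H_j$'' to ``$v_j = 0$'', which rests on the elementary observation that any line in $T$ whose direction vector has nonzero $j$-th component surjects onto $\bbR/\bbZ$ in its $j$-th coordinate. The argument works uniformly whether or not $p$ is rational and nowhere uses continuity of $g$; in particular it avoids having to first prove that $g$ preserves parallelism of lines or that $g$ sends rational $2$-planes to rational $2$-planes.
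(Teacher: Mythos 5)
Your proof is correct, and it follows the same overall strategy as the paper -- use the interaction of $\ell$ with the coordinate hyperplanes $H_j$ (which $g$ fixes) to pin down the direction of $g(\ell)$, then produce a common point via the pointwise-fixed intersection $\ell\cap H_i$ -- but the key step is executed differently. The paper determines the direction of $g(\ell)$ by intersection \emph{counting}: it invokes the formula $|m\cap H_j|=|v_j|$ for a rational line $m=[\bbR v]$, notes $|\ell\cap H_j|=\delta_{ij}$, and transfers this count through $g$. You instead use only the \emph{emptiness} of intersections: either $\ell$ lies inside some $H_j$ ($j\neq i$), in which case it is pointwise fixed, or it is disjoint from every such $H_j$, and a line whose direction has nonzero $j$-th component must meet $H_j$. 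Your route is more elementary (no counting formula, no need to know that $g(\ell)$ is a rational line) and it explicitly handles the degenerate case $\ell\subseteq H_j$, where the paper's assertion $|\ell\cap H_j|=\delta_{ij}$ is literally false (the intersection is infinite) even though its conclusion survives. All the individual steps check out: the identity $g(\ell)\cap H_j=g(\ell\cap H_j)$ uses injectivity of $g$ together with $g(H_j)=H_j$, and the final step correctly uses that $p=\ell\cap H_i$ lies in $H_i$ and is therefore fixed.
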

\begin{proof} First recall the following elementary general fact about the number of intersections of a line with a coordinate hyperplane: If $m$ is any rational line and
	$$v=\begin{pmatrix} v_1 \\ \vdots \\ v_n\end{pmatrix}$$
is a primitive integer vector with $m=[\bbR v]$, then
	$$|v\cap H_i|=|v_i|.$$
Now let $\ell$ be parallel to $\ell_i$. Then
	$$|\ell\cap H_j|=\delta_{ij},$$
and hence also
	$$|g(\ell)\cap H_j|=|g(\ell)\cap g(H_j)|=|\ell\cap H_j|=\delta_{ij}.$$
Therefore we see that if $v$ is a primitive integer vector with $g(\ell)=[\bbR v]$, then $v=e_i$. This exactly means that $\ell$ and $g(\ell)$ are parallel. Hence to show that $\ell=g(\ell)$, it suffices to show that $\ell\cap g(\ell)$ is nonempty. 

Since $\ell$ is parallel to $\ell_i$, there is a unique point of intersection $x_\ell:=\ell\cap H_i$. Since $x_\ell \in H_i$, we have $g(x_\ell)=x_\ell$, so $x_\ell\in \ell\cap g(\ell)$, as desired.\end{proof}

We are now able to finish the proof of the main theorem. Let $x\in T$ and let $\ell_i(x)$ be the unique line parallel to $\ell_i$ that passes through $x$. For any two distinct indices $i\neq j$, the point $x$ is the unique point of intersection of $\ell_i(x)$ and $\ell_j(x)$. On the other hand,
	$$g(x)\in g(\ell_i(x))\cap g(\ell_j(x))=\ell_i(x)\cap \ell_j(x),$$
so we must have $g(x)=x$.\end{proof}

\bibliographystyle{alpha}
\bibliography{ftagref}

\begin{thebibliography}{Mat07}

\bibitem[Ale67]{ftlg}
A.~D. Alexandrov.
\newblock A contribution to chronogeometry.
\newblock {\em Canad. J. Math.}, 19:1119--1128, 1967.

\bibitem[Art88]{ftpg}
E.~Artin.
\newblock {\em Geometric algebra}.
\newblock Wiley Classics Library. John Wiley \& Sons, Inc., New York, 1988.
\newblock Reprint of the 1957 original, A Wiley-Interscience Publication.

\bibitem[CP99]{ftgnoninv}
Alexander Chubarev and Iosif Pinelis.
\newblock Fundamental theorem of geometry without the {$1$}-to-{$1$}
  assumption.
\newblock {\em Proc. Amer. Math. Soc.}, 127(9):2735--2744, 1999.

\bibitem[Gol16]{geomstruct}
William Goldman.
\newblock Geometric structures on manifolds.
\newblock Lecture notes, available at
  \url{http://www.math.umd.edu/~wmg/gstom.pdf}, 2016.

\bibitem[Jef00]{fthg}
Jason Jeffers.
\newblock Lost theorems of geometry.
\newblock {\em Amer. Math. Monthly}, 107(9):800--812, 2000.

\bibitem[Mat03]{geodeq}
Vladimir~S. Matveev.
\newblock Hyperbolic manifolds are geodesically rigid.
\newblock {\em Invent. Math.}, 151(3):579--609, 2003.

\bibitem[Mat07]{PLOC}
Vladimir~S. Matveev.
\newblock Proof of the projective {L}ichnerowicz-{O}bata conjecture.
\newblock {\em J. Differential Geom.}, 75(3):459--502, 2007.

\bibitem[Sta47]{ftag}
G.~Von Staudt.
\newblock {\em Geometrie der Lage}.
\newblock N\"urnberg: F. Korn, 1847.

\bibitem[Zeg16]{zeghibPLC}
Abdelghani Zeghib.
\newblock On discrete projective transformation groups of {R}iemannian
  manifolds.
\newblock {\em Adv. Math.}, 297:26--53, 2016.

\end{thebibliography}

\end{document}